\theoremstyle{plain}
\newtheorem{lemma}{Lemma}[section]
\newtheorem{proposition}[lemma]{Proposition}
\newtheorem{theorem}[lemma]{Theorem}
\newtheorem{corollary}[lemma]{Corollary}
\theoremstyle{definition}
\newtheorem{definition}[lemma]{Definition}
\newtheorem{notation}[lemma]{Notation}
\newtheorem{remark}[lemma]{Remark}
\begin{document}

\title[Enveloping algebras of solvable Malcev algebras]
{Enveloping algebras of solvable Malcev algebras of dimension five}

\author[Bremner]{Murray R. Bremner}

\address{Department of Mathematics and Statistics,
University of Saskatchewan, Canada}

\email{bremner@math.usask.ca}

\author[Tvalavadze]{Marina V. Tvalavadze}

\address{Department of Mathematics and Statistics,
University of Saskatchewan, Canada}

\email{tvalavadze@math.usask.ca}

\date{\textit{\today}}

\begin{abstract}
We study the universal enveloping algebras of the one-parameter family of
solvable 5-dimensional non-Lie Malcev algebras.  We explicitly determine the
universal nonassociative enveloping algebras (in the sense of P\'erez-Izquierdo
and Shestakov) and the centers of the universal enveloping algebras.  We also
determine the universal alternative enveloping algebras.
\end{abstract}

\keywords{Malcev algebras, universal enveloping algebras, central elements,
alternative algebras}

\subjclass[2000]{Primary 17D10. Secondary 17D05, 17B35, 17A99}

\maketitle

\section{Introduction}

In this paper we study the 5-dimensional solvable (non-nilpotent non-Lie)
Malcev algebras and their universal enveloping algebras. Over a field of
characteristic 0, the 5-dimensional Malcev algebras were classified by Kuzmin
\cite{Kuzmin}: there is one nilpotent algebra, solvable algebras of five
different types, and one non-solvable algebra. Except for a finite number of
special cases, the solvable algebras belong to a family whose structure
constants involve a non-zero parameter $\gamma$. Kuzmin omitted the details of
the classification in the solvable case, but Gavrilov \cite{Gavrilov} has
recently recovered these results using Malcev cocycles.

In 2004, P\'erez-Izquierdo and Shestakov \cite{PerezIzquierdoShestakov}
extended the Poincar\'e-Birkhoff-Witt (PBW) theorem from Lie algebras to Malcev
algebras. For any Malcev algebra $M$ over a field of characteristic 0 or $p
> 3$, they constructed a universal nonassociative enveloping algebra
$U(M)$ which shares many properties of the universal associative enveloping
algebras of Lie algebras. In general $U(M)$ is not alternative, and so it is
interesting to determine its alternator ideal $I(M)$ and its maximal
alternative quotient $A(M) = U(M)/I(M)$, which is the universal alternative
enveloping algebra of $M$. This produces new examples of infinite dimensional
alternative algebras. The details have been worked out by Bremner, Hentzel,
Peresi and Usefi \cite{BHPU,BremnerUsefi} for the 4-dimensional solvable
algebra and the 5-dimensional nilpotent algebra.  See also the survey article
\cite{BHPTU}.

The goal of this paper is to compute explicit structure constants for
$U(\mathbb{M})$ and $A(\mathbb{M})$ where $\mathbb{M} = \mathbb{M}_\gamma$
belongs to the one-parameter family of 5-dimensional solvable Malcev algebras.
We also determine the center of $U(\mathbb{M})$ which is non-trivial if and
only if the parameter $\gamma$ is rational.

We recall the structure constants of $\mathbb{M}=\text{span}\{e_0, e_1, e_2,
e_3, e_4\}$ from \cite{Kuzmin}:
  \[
  e_1 e_2 = e_3, \quad
  e_0 e_1 = e_1, \quad
  e_0 e_2 = e_2, \quad
  e_0 e_3 = - e_3, \quad
  e_0 e_4 = \gamma' e_4 \; (\gamma' \ne 0).
  \]
Thus $e_1, e_2, e_3$ span a 3-dimensional nilpotent Lie algebra, and $e_1, e_2,
e_3, e_4$ span the direct sum of this Lie algebra with a 1-dimensional abelian
Lie algebra. The basis element $e_0$ acts diagonally on this 4-dimensional
nilpotent Lie algebra, producing a 5-dimensional solvable Malcev algebra; the
parameter enters only into the action of $e_0$ on $e_4$. We change notation,
replacing $e_0$ by $-a$, $e_1$ by $b$, $e_2$ by $c$, $e_3$ by $2 d$, $e_4$ by
$e$ (and $\gamma'$ by $-\gamma$), and obtain the following structure constants
for $\mathbb{M}$:
  \begin{equation}
  \label{oneparameterfamily}
  [ b, c ] = 2 d, \quad
  [ a, b ] = - b, \quad
  [ a, c ] = - c, \quad
  [ a, d ] = d, \quad
  [ a, e ] = \gamma e \; (\gamma \ne 0).
  \end{equation}
The span of $a, b, c, d$ is the 4-dimensional solvable (non-Lie) Malcev
algebra; see \cite{BHPU}.


\section{Preliminaries}

\begin{definition}
The \textbf{generalized alternative nucleus} of a nonassociative algebra $A$
over a field $F$ is the subspace
\[
 N_{\mathrm{alt}}(A)
  =
  \big\{ \,
  s \in A
  \mid
  (s,x,y) = -(x,s,y) = (x,y,s), \, \forall \, x, y \in A
  \, \big\}.
  \]
\end{definition}

In general $N_{\mathrm{alt}}(A)$ is not a subalgebra of $A$, but it is a
subalgebra of $A^-$ (it is closed under the commutator) and is in fact a Malcev
algebra.

\begin{theorem}
\emph{(P\'erez-Izquierdo and Shestakov \cite{PerezIzquierdoShestakov})} For
every Malcev algebra $M$ over a field $F$ of characteristic $\ne 2, 3$ there
exists a nonassociative algebra $U(M)$ and an injective algebra morphism
$\iota\colon M \to U(M)^-$ such that $\iota(M) \subseteq
N_{\mathrm{alt}}(U(M))$; furthermore, $U(M)$ is a universal object with respect
to such morphisms.
\end{theorem}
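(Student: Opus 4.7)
The plan is to build $U(M)$ by generators and relations in the most economical way possible, verify universality formally, and then reserve the real effort for the injectivity of $\iota$.

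First, I would form the free nonassociative algebra $F\{M\}$ on the underlying vector space of $M$ and consider the two-sided ideal $J$ generated by three families of elements: (a) $xy - yx - [x,y]_M$ for $x, y \in M$, which will force $\iota$ to be a Malcev morphism into $U(M)^-$; (b) $(x,u,v) + (u,x,v)$ for all $x \in M$ and $u, v \in F\{M\}$; and (c) $(u,x,v) + (u,v,x)$ for all $x \in M$ and $u, v \in F\{M\}$. Families (b) and (c) together encode the condition that the image of $M$ lies in the generalized alternative nucleus. Define $U(M) = F\{M\}/J$ and let $\iota$ be the composition $M \hookrightarrow F\{M\} \twoheadrightarrow U(M)$.

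By construction $\iota$ is linear, respects commutators, and its image lies in $N_{\mathrm{alt}}(U(M))$. Universality is then formal: given any nonassociative algebra $B$ and any Malcev morphism $\phi \colon M \to B^-$ with $\phi(M) \subseteq N_{\mathrm{alt}}(B)$, the underlying linear map extends uniquely to an algebra morphism $\widetilde{\phi} \colon F\{M\} \to B$, and each generator of $J$ lies in $\ker\widetilde{\phi}$, so $\widetilde{\phi}$ descends to a unique morphism $U(M) \to B$ compatible with $\iota$.

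The genuine obstacle is injectivity of $\iota$; this is really a Poincar\'e--Birkhoff--Witt theorem in disguise. The approach is to fix an ordered basis $(x_i)_{i \in I}$ of $M$, and call PBW monomials the left-normed products $(\cdots((x_{i_1} x_{i_2}) x_{i_3})\cdots) x_{i_n}$ with $i_1 \le \cdots \le i_n$. Spanning of $U(M)$ by PBW monomials follows from a rewriting argument: the commutator relation (a) lets one sort factors, while (b) and (c) allow one to reassociate any bracketing into left-normed form, using an induction on a suitable length-lexicographic ordering on monomials.

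Linear independence, where the main combinatorial difficulty lies, I would attack by constructing an explicit faithful representation of $U(M)$ on the symmetric algebra $S(M)$. Concretely, one prescribes left multiplication operators $L_{x_i}$ on $S(M)$, defined inductively on the degree of PBW monomials in such a way that relations (a), (b), (c) are forced to hold at the operator level; the verification that this prescription really yields a well-defined $U(M)$-module structure is the delicate step and is where both the Malcev identity on $M$ and the characteristic assumption $\mathrm{char}\, F \ne 2, 3$ are consumed (to invert the small integers appearing when the alternator identities are expanded). Once such a module exists, the induced map $U(M) \to \operatorname{End} S(M)$ separates the PBW monomials, which simultaneously gives a PBW basis for $U(M)$ and the injectivity of $\iota$, completing the theorem.
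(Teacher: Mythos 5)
This theorem is imported: the paper does not prove it, but only records the construction $U(M)=F(M)/R(M)$ and cites P\'erez-Izquierdo and Shestakov for the statement and for the fact that the left-tapped ordered monomials form a basis. Your construction of $U(M)$ agrees exactly with the one the paper records (same three families of generators for the ideal), and your universality argument is the standard formal one and is correct. Your strategy for injectivity --- reduce to a PBW-type basis theorem, get spanning by rewriting with the commutator and nucleus relations, and get linear independence by building a faithful representation on the polynomial/symmetric algebra via inductively defined left-multiplication operators --- is in fact the strategy of the cited source; the recursions that drive that induction are precisely the identities recorded in the paper's Lemma 2.3 (the formulas for $sx$ in terms of $t(sy)$ and for reducing $(sx)z$ to lower left-degree terms), and the denominators $2$, $3$, $6$ there are exactly where the characteristic hypothesis is spent, as you say.

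The gap is that your outline stops exactly where the content of the theorem begins. You assert that one can ``prescribe left multiplication operators $L_{x_i}$ on $S(M)$ \dots\ in such a way that relations (a), (b), (c) are forced to hold at the operator level,'' and you correctly identify this well-definedness as ``the delicate step,'' but you do not carry it out, nor do you indicate how the Malcev identity (as opposed to an arbitrary anticommutative bracket) enters the verification. That verification is the theorem: for a generic anticommutative algebra the analogous quotient collapses, so some identity on $M$ must be consumed to show the ordered monomials stay independent, and exhibiting where it is consumed is the whole point. Two smaller remarks: your PBW monomials are left-normed, $(\cdots(x_{i_1}x_{i_2})\cdots)x_{i_n}$, whereas the paper and the source use the right-nested (``left-tapped'') monomials $x_{i_1}(x_{i_2}(\cdots))$; either convention can work, but the recursion in Lemma 2.3 is set up for the right-nested form, so your rewriting argument would need the mirror-image identities. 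Also, to match the paper you should take the \emph{unital} free nonassociative algebra, so that the empty monomial $1$ is part of the basis.
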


The algebra $U(M)$ is constructed as follows. Let $F(M)$ be the unital free
nonassociative algebra on a basis of $M$. Let $R(M)$ be the ideal of $F(M)$
generated by the following elements for all $s, t \in M$ and all $x, y \in
F(M)$:
  \[
  st - ts - [s,t],
  \qquad
  (s,x,y) + (x,s,y),
  \qquad
  (x,s,y) + (x,y,s).
  \]
Define $U(M) = F(M)/R(M)$ with the natural mapping
  \[
  \iota\colon M \to N_\mathrm{alt}(U(M)) \subseteq U(M),
  \qquad
  s \mapsto \iota(s) = \overline{s} = s + R(M).
  \]
Since $\iota$ is injective, we may identify $M$ with $\iota(M) \subseteq U(M)$.
We fix a basis $B = \{ a_i \,|\, i \in \mathcal{I} \}$ of $M$ and a total order
$<$ on $\mathcal{I}$, and define
  \[
  \Omega =
  \{ \,
  (i_1,\hdots,i_n)
  \mid
  n \ge 0, \,
  i_1 \le \cdots \le i_n \,
  \}.
  \]
For $n = 0$ we have $\overline{a}_\emptyset = 1 \in U(M)$, and for $n \ge 1$
the $n$-tuple $(i_1,\hdots,i_n) \in \Omega$ defines a left-tapped monomial
  \[
  \overline{a}_I =
  \overline{a}_{i_1} (
  \overline{a}_{i_2} ( \cdots
  ( \overline{a}_{i_{n-1}} \overline{a}_{i_n} )
  \cdots )),
  \qquad
  |\overline{a}_I| = n.
  \]
In \cite{PerezIzquierdoShestakov} it is shown that the set of all
$\overline{a}_I$ for $I \in \Omega$ is a basis of $U(M)$.  It follows that
there is a linear isomorphism $\phi\colon U(M) \to P(M)$ which is the identity
on $M$, where $P(M)$ is the polynomial algebra on $M$. Since $M \subseteq
N_\mathrm{alt}(U(M))$, for any $s, t \in M$ and $x \in U(M)$ we have
  \[
  (s,t,x)
  =
  \tfrac16
  [[x,s],t]
  -
  \tfrac16
  [[x,t],s]
  -
  \tfrac16
  [x,[s,t]].
  \]
This equation implies the following lemma, which is implicit in
\cite{PerezIzquierdoShestakov}.

\begin{lemma} \label{shestakovlemma}
Let $x$ be a basis monomial of $U(M)$ with $|x| \ge 2$ and write $x = ty$ with
$t \in M$.  Then for any $s \in M$ we have
  \allowdisplaybreaks
  \begin{align}
  [x,s]
  &=
  [t,s]y
  +
  t[y,s]
  +
  \tfrac12
  [[y,s],t]
  -
  \tfrac12
  [[y,t],s]
  -
  \tfrac12
  [y,[s,t]],
  \label{rightbracketformula}
  \\
  sx
  &=
  t(sy)
  +
  [s,t]y
  -
  \tfrac13
  [[y,s],t]
  +
  \tfrac13
  [[y,t],s]
  +
  \tfrac13
  [y,[s,t]].
  \label{leftproductformula}
  \end{align}
Let $y$ be a basis monomial of $U(M)$ with $|y| \ge 2$ and write $y = sx$ with
$s \in M$. Then for any basis monomial $z$ of $U(M)$ we have
  \begin{equation}
  yz = (sx)z = 2 s(xz) - x(sz) - x[z,s] + [xz,s].
  \end{equation}
\end{lemma}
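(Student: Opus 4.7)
The plan is to derive each of the three identities by expanding the target product or commutator in terms of associators, using the defining relations $(s,x,y)=-(x,s,y)=(x,y,s)$ of $N_{\mathrm{alt}}(U(M))$ to reduce all associators to a standard form, and finally substituting the displayed formula for $(s,t,x)$ in the case where both elements in the first two slots lie in $M$.

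For \eqref{rightbracketformula}, I would start from
\[
[ty,s] = (ty)s - s(ty) = \bigl(t(ys)+(t,y,s)\bigr) - \bigl((st)y - (s,t,y)\bigr),
\]
and then use $(st)y=(ts)y+[s,t]y$, $(ts)y=t(sy)+(t,s,y)$, and $t(ys)=t(sy)+t[y,s]$ to collect everything into
\[
[ty,s]=[t,s]y+t[y,s]+(t,y,s)-(t,s,y)+(s,t,y).
\]
Since $s,t\in M\subseteq N_{\mathrm{alt}}(U(M))$, the alternator identities give $(t,y,s)=(s,t,y)$ and $(t,s,y)=-(s,t,y)$, so the three residual associators collapse to $3(s,t,y)$; substituting the formula $(s,t,y)=\tfrac{1}{6}[[y,s],t]-\tfrac{1}{6}[[y,t],s]-\tfrac{1}{6}[y,[s,t]]$ yields the stated equation. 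The proof of \eqref{leftproductformula} is parallel: the same expansion gives $s(ty)=t(sy)+[s,t]y-2(s,t,y)$, and a second substitution of the same formula produces the identity.

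The third equation is slightly different because only one $M$-element is present. I would write $(sx)z=s(xz)+(s,x,z)$, apply $(s,x,z)=(x,z,s)$ to rewrite the associator as $(xz)s-x(zs)$, and then split $(xz)s=[xz,s]+s(xz)$ and $x(zs)=x(sz)+x[z,s]$; collecting terms yields the equation directly, with no further appeal to the formula for $(s,t,x)$. The main obstacle is purely bookkeeping of the signs coming from the two alternator identities: one must correctly recognize that $(t,y,s)$, $(t,s,y)$, and $(s,t,y)$ collapse (up to sign) to a single associator to which the Shestakov--P\'erez-Izquierdo formula applies, and then track the factor of $3$ (resp.\ $-2$) that results in the scaling of the three bracket terms in \eqref{rightbracketformula} (resp.\ \eqref{leftproductformula}).
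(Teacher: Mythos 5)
Your derivation is correct: expanding $[ty,s]$, $s(ty)$, and $(sx)z$ into associators, using the commutator relation $uv = vu + [u,v]$ for elements of $M$, collapsing the residual associators via the $N_{\mathrm{alt}}$ identities to $3(s,t,y)$, $-2(s,t,y)$, and $(s,x,z)=(x,z,s)$ respectively, and then substituting $(s,t,y)=\tfrac16[[y,s],t]-\tfrac16[[y,t],s]-\tfrac16[y,[s,t]]$ reproduces all three formulas with the right coefficients. This is exactly the argument the paper leaves implicit (it states only that the lemma follows from the displayed associator formula), so you have simply supplied the omitted details of the intended proof.
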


\begin{definition}
In the nonassociative algebra $A$, we write $L_s$ and $R_s$ for the operators
of left and right multiplication by $s$, and set $\mathrm{ad}_s = R_s - L_s$.
We define
 \[
  D_{s,t} = [L_s, L_t] + [L_s, R_t] + [R_s, R_t].
 \]
We note that $D_{t,s} = - D_{s,t}$ and $D_{s,s} = 0$.
\end{definition}

In Table \ref{derivationtable} we record the values of the derivations
$D_{s,t}$ on the one-parameter family $\mathbb{M}_\gamma$ of 5-dimensional
solvable Malcev algebras (dot indicates zero).

The following lemma is proved by Morandi, P\'erez-Izquierdo and Pumpl\"un
\cite[Lemma 4.2]{Morandi}; note that we have changed the sign of the
$\mathrm{ad}$-operator.

\begin{lemma} \label{morandilemma}
If $A$ is a nonassociative algebra with $s, t \in N_{\mathrm{alt}}(A)$ and $x
\in A$ then
  \allowdisplaybreaks
  \begin{alignat*}{3}
  &
  L_{sx} = L_s L_x + [R_s, L_x],
  &\quad
  &
  L_{xs} = L_x L_s + [L_x, R_s],
  &\quad
  &
  [L_s, L_t] = L_{[s,t]} - 2 [R_s, L_t],
  \\
  &
  R_{sx} = R_x R_s + [R_x, L_s],
  &\quad
  &
  R_{xs} = R_s R_x + [ L_s, R_x],
  &\quad
  &
  [R_s, R_t] = - R_{[s,t]} - 2 [L_s, R_t],
  \\
  &
  [L_s, R_t] = [R_s, L_t].
  \end{alignat*}
The operator $D_{s,t}$ is a derivation, and we have
  \[
  D_{s,t} = -\mathrm{ad}_{[s,t]} - 3 [L_s, R_t],
  \qquad
  2 D_{s,t} = - \mathrm{ad}_{[s,t]} + [\mathrm{ad}_s, \mathrm{ad}_t].
  \]
\end{lemma}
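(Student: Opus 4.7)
The plan is to unpack the defining identities $(s,x,y) = -(x,s,y) = (x,y,s)$ of the alternative nucleus (and the analogous ones for $t$) as operator equations on $A$, and then combine them. Writing associators as $(u,v,w) = (uv)w - u(vw)$, one checks directly that $L_{sx}(y) - L_sL_x(y) = (s,x,y)$ and $[R_s,L_x](y) = (x,y,s)$; the equality $(s,x,y) = (x,y,s)$ from $s \in N_{\mathrm{alt}}(A)$ is therefore exactly $L_{sx} = L_sL_x + [R_s,L_x]$. The same bookkeeping yields $L_{xs} = L_xL_s + [L_x,R_s]$ from $(x,s,y) = -(x,y,s)$, and reversing the roles of left and right produces $R_{sx} = R_xR_s + [R_x,L_s]$ and $R_{xs} = R_sR_x + [L_s,R_x]$. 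The symmetry $[L_s,R_t] = [R_s,L_t]$ is the one step requiring both $s$ and $t$ to lie in $N_{\mathrm{alt}}(A)$: applied to $y$, the two sides become $-(s,y,t)$ and $(t,y,s)$, and equality follows by chaining the three alternative-nucleus relations alternately on $s$ and on $t$.

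From here the commutator formulas arise by subtraction. Subtracting $L_{ts} = L_tL_s + [L_t,R_s]$ from $L_{st} = L_sL_t + [R_s,L_t]$ and invoking $[L_t,R_s] = -[L_s,R_t] = -[R_s,L_t]$ yields $L_{[s,t]} = [L_s,L_t] + 2[R_s,L_t]$, which is the stated formula for $[L_s,L_t]$; the identity $[R_s,R_t] = -R_{[s,t]} - 2[L_s,R_t]$ is obtained analogously from the right-multiplication identities. Substituting both into $D_{s,t} = [L_s,L_t] + [L_s,R_t] + [R_s,R_t]$ and using $[R_s,L_t] = [L_s,R_t]$ collapses the expression to $L_{[s,t]} - R_{[s,t]} - 3[L_s,R_t] = -\mathrm{ad}_{[s,t]} - 3[L_s,R_t]$. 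Separately, expanding $[\mathrm{ad}_s,\mathrm{ad}_t] = [R_s - L_s, R_t - L_t]$ into its four pieces and applying the same substitutions gives $[\mathrm{ad}_s,\mathrm{ad}_t] = -\mathrm{ad}_{[s,t]} - 6[L_s,R_t]$; subtracting yields $2D_{s,t} = -\mathrm{ad}_{[s,t]} + [\mathrm{ad}_s,\mathrm{ad}_t]$.

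The main obstacle is verifying that $D_{s,t}$ is itself a derivation, since none of the three summands is individually one. A preliminary calculation shows that for $u \in N_{\mathrm{alt}}(A)$ the operator $\mathrm{ad}_u$ fails to be a derivation by exactly $3(x,y,u)$, but $[s,t]$ need not lie in $N_{\mathrm{alt}}(A)$, so the formula $D_{s,t} = -\mathrm{ad}_{[s,t]} - 3[L_s,R_t]$ does not split the problem cleanly. I would therefore compute the defect $D_{s,t}(xy) - D_{s,t}(x)y - x\,D_{s,t}(y)$ by direct expansion, using the six operator identities of the first paragraph to rewrite each summand. The result is a sum of associators with $s$ or $t$ in one slot, which repeated application of the alternative-nucleus relations reduces to a canonical form where all terms cancel in pairs. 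This step is essentially bookkeeping but is where most of the labor sits.
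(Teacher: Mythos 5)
The paper does not actually prove this lemma; it is quoted from Morandi--P\'erez-Izquierdo--Pumpl\"un (their Lemma 4.2), so there is no in-paper argument to compare against and your proof must stand on its own. Your first two paragraphs do stand: the translations $L_{sx}-L_sL_x = (s,x,\cdot)$, $[R_s,L_x] = (x,\cdot,s)$, $[L_s,R_t] = -(s,\cdot,t)$, $[R_s,L_t] = (t,\cdot,s)$ are all correct, the chain $-(s,y,t) = (y,s,t) = (t,y,s)$ gives $[L_s,R_t]=[R_s,L_t]$, and the subtractions leading to $[L_s,L_t] = L_{[s,t]}-2[R_s,L_t]$, $[R_s,R_t] = -R_{[s,t]}-2[L_s,R_t]$, and the two expressions for $D_{s,t}$ all check out.

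The gap is in the third paragraph: the derivation property is the only part of the lemma requiring real work, and you leave it as an unexecuted plan resting on two shaky points. First, your premise that ``$[s,t]$ need not lie in $N_{\mathrm{alt}}(A)$'' is false --- the generalized alternative nucleus is closed under the commutator (the paper states this explicitly right after the definition), so $[s,t]\in N_{\mathrm{alt}}(A)$, the operator $\mathrm{ad}_{[s,t]}$ has defect exactly $3(x,y,[s,t])$ by your own preliminary calculation, and the formula $D_{s,t} = -\mathrm{ad}_{[s,t]} - 3[L_s,R_t]$ therefore \emph{does} split the problem cleanly: since $[L_s,R_t](z) = -(s,z,t)$, everything reduces to the single identity $(s,xy,t) - (s,x,t)y - x(s,y,t) = (x,y,[s,t])$. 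Second, the mechanism you invoke --- ``repeated application of the alternative-nucleus relations'' --- cannot by itself close that identity (or your more expanded version of the defect): the nucleus relations only permute $s$ or $t$ among the three slots of a \emph{fixed} associator, and say nothing about associators such as $(s,xy,t)$ or $(x,y,st)$ whose slots contain products. Breaking those up requires the Teichm\"uller identity $(wx,y,z)-(w,xy,z)+(w,x,yz) = w(x,y,z)+(w,x,y)z$, or an equivalent full re-expansion into parenthesized words; the assertion that ``all terms cancel in pairs'' is precisely the content that needs to be verified. As written, the hardest claim of the lemma is announced rather than proved.
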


Using $D_{s,t}$ we can rewrite the equations \eqref{rightbracketformula} and
\eqref{leftproductformula} as follows:
  \allowdisplaybreaks
  \begin{align}
  [ ty, s ] &= [t,s] y + t [y,s] - D_{s,t}(y) - [y,[s,t]],
  \label{rightbracket}
  \\
  s(ty) &= t(sy) + [s,t] y + \tfrac23 D_{s,t}(y) + \tfrac23 [y,[s,t]].
  \label{leftproduct}
  \end{align}

  \begin{table}
  \begin{center}
  \[
  \begin{array}{l|rrrrrrrrrr}
    & D_{a,b} & D_{a,c} & D_{a,d} & D_{a,e} & D_{b,c}
    & D_{b,d} & D_{b,e} & D_{c,d} & D_{c,e} & D_{d,e} \\ \hline
    &&&&&&&&&& \\[-8pt]
  a & -b & -c & -d & -\gamma^2 e & d & . & . & . & . & . \\
  b &  . & -d &  . & . & . & . & . & . & . & . \\
  c &  d &  . &  . & . & . & . & . & . & . & . \\
  d &  . &  . &  . & . & . & . & . & . & . & . \\
  e &  . &  . &  . & . & . & . & . & . & . & .
  \end{array}
  \]
  \end{center}
  \caption{Derivations of the one-parameter family of Malcev algebras}
  \label{derivationtable}
  \end{table}


\section{Left multiplications on solvable Malcev algebras}

We recall the classification of 5-dimensional solvable Malcev algebras from
\cite{Kuzmin}. We omit the first case (the direct sum of the 4-dimensional
solvable algebra and a 1-dimensional abelian algebra) and consider only the
remaining five cases:
  \allowdisplaybreaks
  \begin{alignat}{5}
  [b,c] &= 2d, &\;\;
  [a,b] &=  b, &\;\;
  [a,c] &=  c, &\;\;
  [a,d] &= 2d - \tfrac12 e, &\;\;
  [a,e] &= -e;
  \label{kuzmin8}
  \\
  [b,c] &= 2d, &\;\;
  [a,b] &=  b, &\;\;
  [a,c] &=  c, &\;\;
  [a,d] &= -d, &\;\;
  [a,e] &= -b-2e;
  \label{kuzmin9}
  \\
  [b,c] &= 2d, &\;\;
  [a,b] &=  b, &\;\;
  [a,c] &=  c, &\;\;
  [a,d] &= -d, &\;\;
  [a,e] &= 2d-e;
  \label{kuzmin10}
  \\
  [b,c] &= 2d, &\;\;
  [a,b] &=  b+e, &\;\;
  [a,c] &=  c, &\;\;
  [a,d] &= -d, &\;\;
  [a,e] &=  e;
  \label{kuzmin11}
  \\
  [b,c] &= 2d, &\;\;
  [a,b] &= -b, &\;\;
  [a,c] &= -c, &\;\;
  [a,d] &=  d, &\;\;
  [a,e] &= \gamma e \;\; (\gamma \ne 0).
  \label{kuzmin12}
  \end{alignat}
(We have made a slight change of basis from \cite{Kuzmin} so that $[b,c] = 2d$
in every case.) Equation \eqref{kuzmin12} coincides with equation
\eqref{oneparameterfamily}; this is the one-parameter family on which we focus
in this paper. But for the rest of this section we work more generally and let
$M =\text{span}\{a,b,c,d,e\}$ be one of the algebras in equations
\eqref{kuzmin8}, \eqref{kuzmin10}, \eqref{kuzmin11}, \eqref{kuzmin12}. Then $L
= \text{span}\{b,c,d,e\}$ is a 4-dimensional nilpotent Lie algebra with $[b,c]
= 2d$ and other products zero; furthermore, $[a,L] = L$.

\begin{lemma} \label{zeroderivations}
If $M$ is one of the algebras \eqref{kuzmin8}, \eqref{kuzmin10},
\eqref{kuzmin11}, \eqref{kuzmin12} then in $U(M)$ we have
  \[
  D_{b,d} = D_{b,e} = D_{c,d} = D_{c,e} = D_{d,e} = 0.
  \]
\end{lemma}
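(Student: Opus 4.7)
The plan is twofold: reduce from $U(M)$ to $M$ using the derivation property, then verify the vanishing on each generator by a direct Malcev-bracket computation.

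For the reduction, by Lemma \ref{morandilemma} each $D_{s,t}$ with $s,t \in M \subseteq N_{\mathrm{alt}}(U(M))$ is a derivation of the nonassociative algebra $U(M)$. The PBW theorem cited in the excerpt shows that $U(M)$ is generated as an algebra by $M$ (every basis monomial $\overline{a}_I$ is an explicit product of elements of $M$), so any derivation that annihilates $M$ annihilates every $\overline{a}_I$ by induction on $|\overline{a}_I|$ via the Leibniz rule. It therefore suffices to show that each of $D_{b,d},D_{b,e},D_{c,d},D_{c,e},D_{d,e}$ kills each of the basis elements $a,b,c,d,e$ of $M$.

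For the case check, using $\mathrm{ad}_s(x) = [x,s]$, the identity $2D_{s,t} = -\mathrm{ad}_{[s,t]} + [\mathrm{ad}_s,\mathrm{ad}_t]$ from Lemma \ref{morandilemma} rewrites as
\[
2D_{s,t}(x) = -[x,[s,t]] + [[x,t],s] - [[x,s],t]
\]
for $x \in M$, an identity purely in the Malcev algebra $M$. Each of the five chosen pairs lies in $L = \mathrm{span}\{b,c,d,e\}$ and none is $(b,c)$; since $[b,c]=2d$ is the only nonzero bracket inside $L$, we get $[s,t]=0$ in all four algebras \eqref{kuzmin8}, \eqref{kuzmin10}, \eqref{kuzmin11}, \eqref{kuzmin12}, killing the first term. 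When $x \in L$, the inner bracket $[x,s]$ is either zero or a scalar multiple of $d$ (and likewise $[x,t]$), and $d$ centralizes $L$, so both outer brackets vanish. When $x=a$, inspection of the structure constants of \eqref{kuzmin8}--\eqref{kuzmin12} shows that $[a,d]$ and $[a,e]$ always lie in $\mathrm{span}\{d,e\}$, which commutes with $b$ and $c$, while $[a,b]$ and $[a,c]$ lie in $L$, which commutes with $d$ and $e$; since each of our pairs contains at least one of $d$ or $e$, both outer brackets are forced to zero. The five zero columns of Table \ref{derivationtable} record precisely these computations for the family \eqref{kuzmin12}.

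The only real obstacle is the clerical work of the case-by-case check for $x=a$ across the four algebras, which I would organize as an extension of the relevant columns of Table \ref{derivationtable}. The conceptual content is the reduction step: a derivation of the nonassociative algebra $U(M)$ is determined by its restriction to a generating set, and this reduces an a priori infinite check to the five finite column-computations above.
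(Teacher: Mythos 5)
Your proof is correct and follows essentially the same route as the paper's: reduce to the generators via the derivation property, then use $2D_{s,t}=-\mathrm{ad}_{[s,t]}+[\mathrm{ad}_s,\mathrm{ad}_t]$ with $[s,t]=0$ together with the containments $[a,d],[a,e]\in\mathrm{span}\{d,e\}$ and $\mathrm{ad}_d|_L=\mathrm{ad}_e|_L=0$. The only nit is the phrase ``the structure constants of \eqref{kuzmin8}--\eqref{kuzmin12}'', which literally includes \eqref{kuzmin9}, where $[a,e]=-b-2e\notin\mathrm{span}\{d,e\}$; the lemma (and your own earlier sentence) correctly excludes that case.
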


\begin{proof}
We have $D_{s,t}= -\frac12 \mathrm{ad}_{[s,t]} + \frac12 [\mathrm{ad}_s,
\mathrm{ad}_t]$ where $s, t \in M$. If $D_{s,t}$ is one of the above
derivations then $[s,t] = 0$ and so $\mathrm{ad}_{[s,t]} = 0$; hence $D_{s,t} =
\frac12 [\mathrm{ad}_s, \mathrm{ad}_t ]$. From the multiplication table of $M$
we have $\mathrm{ad}_d(M) \subseteq \mathrm{span} \{ d, e \} $ and
$\mathrm{ad}_e (M) \subseteq \mathrm{span} \{ d,e \}$. Hence $\mathrm{ad}_s (
\mathrm{ad}_d (M) ) = 0$ and $\mathrm{ad}_s ( \mathrm{ad}_e (M) ) = 0$ for $s
\in \{b,c,d,e\}$ (in fact $s \in L$). On the other hand, $\mathrm{ad}_d |_L =0$
and $\mathrm{ad}_e |_L =0$. It follows that $D_{s,t}(M) = \frac12 \mathrm{ad}_s
\mathrm{ad}_t(M) - \frac12 \mathrm{ad}_t\mathrm{ad}_s (M) =0$ for $t \in \{ d,
e \}$ and $ s \in \{b,c,d,e\}$. Since $D_{s,t}$ is a derivation on $U(M)$, we
have $D_{s,t}(U(M))=0$.
\end{proof}

\begin{lemma}\label{powerofL}
In $U(M)$ we have $L_{s^m} = L_s^m$ for all $s \in M$.
\end{lemma}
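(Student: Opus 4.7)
The plan is a short induction on $m$, using the left-normed convention $s^m = s \cdot s^{m-1}$. The case $m = 1$ is immediate. For the inductive step, since $s \in M \subseteq N_{\mathrm{alt}}(U(M))$, Lemma \ref{morandilemma} applies and gives
\[
L_{s^m} \;=\; L_{s \cdot s^{m-1}} \;=\; L_s L_{s^{m-1}} + [R_s, L_{s^{m-1}}] \;=\; L_s^{\,m} + [R_s, L_s^{m-1}],
\]
where the last equality uses the inductive hypothesis. It therefore suffices to show that $[R_s, L_s^{m-1}] = 0$.

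For this I would first establish that $[L_s, R_s] = 0$. Setting $y = s$ in the defining relation $(x,s,y) + (x,y,s) = 0$ for the generalized alternative nucleus gives $(x,s,s) = 0$ for all $x \in U(M)$, and combining with $(s,x,y) = -(x,s,y)$ at $y = s$ yields $(s,x,s) = 0$. This identity translates exactly to $R_s L_s = L_s R_s$. A short induction using the Leibniz rule
\[
[R_s, L_s^{k}] \;=\; L_s\,[R_s, L_s^{k-1}] + [R_s, L_s]\,L_s^{k-1} \;=\; L_s\,[R_s, L_s^{k-1}]
\]
then gives $[R_s, L_s^{k}] = 0$ for all $k \ge 0$; taking $k = m-1$ completes the argument.

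There is no real obstacle here: the whole proof is one invocation of Lemma \ref{morandilemma} together with a commutator identity, both riding on the fact that $s$ lies in $N_{\mathrm{alt}}(U(M))$. The only conceptual point is that $s^m$ must be defined unambiguously; the left-normed convention $s^m = s \cdot s^{m-1}$ is sufficient for the statement, though in fact the three identities $(s,s,x) = (s,x,s) = (x,s,s) = 0$ imply that the subalgebra of $U(M)$ generated by $s$ is associative, so the value of $s^m$ is independent of parenthesization.
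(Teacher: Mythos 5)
Your proof is correct and follows essentially the same route as the paper's: both invoke the identity $L_{sx}=L_sL_x+[R_s,L_x]$ from Lemma \ref{morandilemma}, reduce everything to $[R_s,L_s](x)=(s,x,s)=0$ (which holds because $s\in N_{\mathrm{alt}}(U(M))$), and finish by induction on $m$. Your extra remark on the parenthesization of $s^m$ is a nice touch but not needed, since the paper's left-tapped monomial convention already fixes $s^m=s\cdot s^{m-1}$.
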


\begin{proof}
Lemma \ref{morandilemma} implies that $L_{s^2} = L_s L_s + [R_s, L_s]$ for $s
\in N_{\mathrm{alt}}(U(M))$. Since $s \in M \subseteq N_{\mathrm{alt}}(U(M))$
we have
  \[
  [ R_s, L_s ] (x) = R_s L_s (x) - L_s R_s (x) = (sx)s - s(xs) = (s,x,s) = 0.
  \]
Hence $L_{s^2} = L_s^2$. We now prove that $L_{s^m} = L_s^m$ for $m \ge 3$ by
induction on $m$. By the inductive hypothesis and Lemma \ref{morandilemma} we
have
  \allowdisplaybreaks
  \begin{align*}
  L_{s^{m+1}}
  =
  L_{s s^m}
  =
  L_s L_{s^m} + [ R_s , L_{s^m} ]
  =
  L_s L_s^m + [ R_s, L_s^m ]
  =
  L_s^{m+1},
  \end{align*}
since $R_s$ commutes with $L_s$ (and hence it commutes with $L_s^m$).
\end{proof}

\begin{lemma} \label{cde-operator}
In $U(M)$ the operators $L_c$, $L_d$, $L_e$ are pairwise commutative, and
  \[
  L_{c^k d^l e^m} = L^k_c L^l_d L^m_e.
  \]
\end{lemma}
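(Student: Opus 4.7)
The plan is to first establish $[L_s, L_t] = 0$ and $[R_s, L_t] = 0$ for every $s, t \in \{c, d, e\}$, and then derive the product formula by induction on $k + l + m$.

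For the commutations, observe that $[s, t] = 0$ in $M$ for every pair $s, t \in \{c, d, e\}$. When $s \ne t$, Lemma \ref{zeroderivations} gives $D_{s, t} = 0$, so the identity $D_{s,t} = -\mathrm{ad}_{[s, t]} - 3 [L_s, R_t]$ from Lemma \ref{morandilemma} forces $[L_s, R_t] = 0$. Combining this with $[L_s, R_t] = [R_s, L_t]$ and $[L_s, L_t] = L_{[s,t]} - 2 [R_s, L_t]$ (both from Lemma \ref{morandilemma}) yields $[R_s, L_t] = 0$ and $[L_s, L_t] = 0$. When $s = t$, $[R_s, L_s] = 0$ is immediate from $s \in N_{\mathrm{alt}}(U(M))$, as in the proof of Lemma \ref{powerofL}. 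Thus $L_c, L_d, L_e$ pairwise commute, and $R_s$ commutes with $L_t$ for all $s, t \in \{c, d, e\}$.

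For the formula $L_{c^k d^l e^m} = L_c^k L_d^l L_e^m$, I induct on $N = k + l + m$. The cases $N \le 1$ are trivial and the case $k = l = 0$ reduces to Lemma \ref{powerofL}. For $k \ge 1$, interpret the left-tapped monomial as $c^k d^l e^m = c \cdot y$ with $y = c^{k-1} d^l e^m$, apply $L_{c \cdot y} = L_c L_y + [R_c, L_y]$ from Lemma \ref{morandilemma}, substitute $L_y = L_c^{k-1} L_d^l L_e^m$ by the inductive hypothesis, and observe that $R_c$ commutes with each of $L_c$, $L_d$, $L_e$, hence with their product, so the correction term vanishes. The remaining case $k = 0$, $l \ge 1$ is handled symmetrically using $d^l e^m = d \cdot (d^{l-1} e^m)$ and the fact that $R_d$ commutes with $L_d$ and $L_e$.

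The only non-routine input is the vanishing of the mixed commutators $[L_s, R_t]$ for distinct $s, t \in \{c, d, e\}$, which is supplied directly by Lemma \ref{zeroderivations}; the rest is formal manipulation of the identities collected in Lemma \ref{morandilemma}, so no obstacle of substance is expected.
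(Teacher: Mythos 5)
Your proposal is correct and follows essentially the same route as the paper: both derive the vanishing of the mixed commutators $[R_s,L_t]$ for $s,t\in\{c,d,e\}$ from Lemma \ref{zeroderivations} together with the identities of Lemma \ref{morandilemma}, and then expand $L_{sx}=L_sL_x+[R_s,L_x]$ inductively. The only (immaterial) difference is that you run a single induction on the total degree $k+l+m$ where the paper uses two nested inductions, first on $l$ and then on $k$.
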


\begin{proof}
Lemma \ref{morandilemma} implies that for any $x, y \in N_{\mathrm{alt}}(U(M))$
we have
  \[
  [L_x, L_y] = L_{[x,y]} - 2 [R_x, L_y],
  \quad
  D_{x,y}= -\mathrm{ad}_{[x,y]} - 3 [L_x, R_y],
  \quad
  [L_x, R_y] = [R_x, L_y].
  \]
If $[x,y] = 0$ then $L_{[x,y]} = 0$ and $\mathrm{ad}_{[x,y]} = 0$, and hence
  \[
  [ L_x, L_y ] = \tfrac23 D_{x,y}.
  \]
Therefore by Lemma \ref{zeroderivations} we have
  \[
  [ L_c, L_d ] = [ L_c, L_e ] = [ L_d, L_e ] = 0.
  \]
By Lemma \ref{powerofL} we have
  \[
  L_{c^k} = L^k_c,
  \quad
  L_{d^l} = L^l_d,
  \quad
  L_{e^m} = L^m_e.
  \]
If $[x,y] = 0$ then $D_{x,y} = -3 [ L_x, R_y ] = -3 [ R_x, L_y ]$, and so Lemma
\ref{zeroderivations} implies
  \[
  [ L_c, R_d ] = [ R_c, L_d ] = 0,
  \quad
  [ L_c, R_e ] = [ R_c, L_e ] = 0,
  \quad
  [ L_d, R_e ] = [ R_d, L_e ] = 0.
  \]
We show by induction on $l$ that
  \[
  L_{ d^l e^m } = L_d^l L_e^m.
  \]
The basis $l = 0$ is Lemma \ref{powerofL}. If $L_{d^{l-1} e^m} = L^{l-1}_d
L^m_e$ for some $l \ge 1$ then
  \allowdisplaybreaks
  \begin{align*}
  L_{ d^l e^m }
  &=
  L_{ d ( d^{l-1}e^m ) }
  =
  L_d  L_{ d^{l-1} e^m } + [ R_d, L_{d^{l-1} e^m} ]
  =
  L_d^l L_e^m + [ R_d, L_d^{l-1} L_e^m ]
  \\
  &=
  L_d^l L_e^m + [ R_d, L_d^{l-1} ] L_e^m + L_d^{l-1} [ R_d, L_e^m ]
  =
  L_d^l L_e^m,
  \end{align*}
since $R_d$ commutes with $L_e^m$ and $L_d^{l-1}$. We show by induction on $k$
that
  \[
  L_{c^k d^l e^m}= L^k_c L^l_d L^m_e.
  \]
The basis $k = 0$ is the previous formula. If $ L_{c^{k-1} d^l e^m}= L^{k-1}_c
L^l_d L^m_e$ for $k \ge 1$ then
  \allowdisplaybreaks
  \begin{align*}
  L_{c^k d^l e^m }
  &=
  L_{ c ( c^{k-1} d^le^m ) }
  =
  L_c  L_{c^{k-1} d^l e^m } + [ R_c, L_{c^{k-1} d^l e^m} ]
  \\
  &=
  L_c^k L_d^l L_e^m + [ R_c, L_c^{k-1} L_d^l L_e^m ]
  =
  L_c^k L_d^l L_e^m + [ R_c, L_c^{k-1} ] L_d^l L_e^m
  =
  L_c^k L_d^l L_e^m
  \end{align*} since $R_c$ commutes with $L^{k-1}_c$, $L^l_d$ and $L^m_e$.
The proof is complete.
\end{proof}

\begin{notation}
We set $[ \underbrace{ X, \dots, X }_q, Y ] = [ \underbrace{ X, \hdots, [ X, [
X}_q, Y ] ] \cdots ]$; if $q = 0$ we get $Y$.
\end{notation}

\begin{lemma} \label{bigmorandilemma}
If $s\in N_{\mathrm{alt}} (U(M))$ then
  \[
  L_{ s^k x }
  =
  \sum^k_{q=0}
  \binom{k}{q}
  L^{k-q}_s
  [ \underbrace{R_s,\dots,R_s}_q, L_x ].
  \]
\end{lemma}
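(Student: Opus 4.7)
The plan is to induct on $k$, using Morandi's lemma to peel off one factor of $s$ at each step and Pascal's identity to combine the resulting sums. The base case $k = 0$ reduces to $L_x = L_x$, and $k = 1$ is precisely the identity $L_{sx} = L_s L_x + [R_s, L_x]$ from Lemma \ref{morandilemma}.

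For the inductive step, I would write $s^{k+1} x = s(s^k x)$ (using Lemma \ref{powerofL} to identify $s^{k+1}$ with $s \cdot s^k$) and apply Lemma \ref{morandilemma} again to obtain
\[
L_{s^{k+1} x} = L_s L_{s^k x} + [R_s, L_{s^k x}].
\]
Substituting the inductive hypothesis into each term yields two sums indexed by $q = 0, \dots, k$. The first contributes $\sum_q \binom{k}{q} L_s^{k+1-q}[\underbrace{R_s,\dots,R_s}_q, L_x]$ directly, and for the second I need to move $R_s$ past the factor $L_s^{k-q}$.

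The one observation that makes this work is that $R_s$ commutes with $L_s$ whenever $s \in N_{\mathrm{alt}}(U(M))$: indeed $(L_s R_s - R_s L_s)(y) = s(ys) - (sy)s = -(s,y,s) = 0$ by alternativity of $s$ in the nucleus. Hence $[R_s, L_s^{k-q}] = 0$, and we get
\[
[R_s, L_{s^k x}] = \sum_{q=0}^k \binom{k}{q} L_s^{k-q} [\underbrace{R_s,\dots,R_s}_{q+1}, L_x].
\]
Reindexing this second sum ($q \mapsto q-1$) and combining with the first via Pascal's identity $\binom{k}{q} + \binom{k}{q-1} = \binom{k+1}{q}$ delivers the claimed formula with $k$ replaced by $k+1$.

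The main (minor) obstacle is simply bookkeeping: making sure the $q = 0$ and $q = k+1$ boundary terms match the binomial coefficients $\binom{k+1}{0} = 1$ and $\binom{k+1}{k+1} = 1$ after the reindexing. No deeper identity is needed beyond Morandi's lemma and the commutation $[R_s, L_s] = 0$.
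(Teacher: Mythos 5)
Your proof is correct and follows essentially the same route as the paper's: induction on $k$ with base cases from Lemma \ref{morandilemma}, the key commutation $[R_s,L_s]=0$ (which the paper establishes in the proof of Lemma \ref{powerofL} via the associator $(s,x,s)=0$), and Pascal's identity to merge the two sums. Nothing is missing.
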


\begin{proof}
By induction on $k$; the basis $k = 0$ is trivial, and $k = 1$ is the first
equation in Lemma \ref{morandilemma}. Assume that $k \ge 1$ and that
  \[
  L_{s^k x}
  =
  \sum_{q=0}^k
  \binom{k}{q}
  L^{k-q}_s
  [ \underbrace{R_s,\dots,R_s}_q, L_x ].
  \]
Using Lemma \ref{morandilemma}, the fact that $R_s$ and $L_s$ commute (see the
proof of Lemma \ref{powerofL}) and Pascal's identity for binomial coefficients,
we obtain
  \allowdisplaybreaks
  \begin{align*}
  L_{s^{k+1}x}
  &=
  L_{s(s^k x)}
  =
  L_s L_{s^kx} + [R_s, L_{s^k x}]
  \\
  &=
  \sum_{q=0}^k
  \binom{k}{q}
  L^{k+1-q}_s
  [ \underbrace{R_s,\dots,R_s}_q, L_x ]
  +
  \sum_{q=0}^k \binom{k}{q}
  L^{k-q}_s
  [ \underbrace{R_s,\dots,R_s}_{q+1}, L_x ]
  \\
  &=
  L^{k+1}_s L_x
  +
  \sum_{q=1}^k
  \binom{k}{q}
  L^{k+1-q}_s
  [ \underbrace{R_s,\dots,R_s}_q, L_x ]
  \\
  &\qquad
  +
  \sum_{q=1}^k
  \binom{k}{q{-}1}
  L^{k+1-q}_s
  [ \underbrace{R_s,\dots, R_s}_q, L_x ]
  +
  [ \underbrace{R_s,\dots, R_s}_{k+1}, L_x ]
  \\
  &=
  L^{k+1}_s L_x
  +
  \sum_{q=1}^k
  \binom{k{+}1}{q}
  L^{k+1-q}_s
  [ \underbrace{R_s,\dots, R_s}_q, L_x ]
  +
  [ \underbrace{R_s,\dots,R_s}_{k+1}, L_x ]
  \\
  &=
  \sum_{q=0}^{k+1}
  \binom{k{+}1}{q}
  L^{k+1-q}_s
  [ \underbrace{R_s,\dots,R_s}_q, L_x ].
  \end{align*}
The proof is complete.
\end{proof}

\begin{proposition}\label{bcde-operator}
In $U(M)$ we have
  \[
  L_{ b^j c^k d^l e^m }
  =
  \sum_{\alpha=0}^{\min(j,k)}
  \alpha!
  \binom{j}{\alpha}
  \binom{k}{\alpha}
  L^{j-\alpha}_b
  L^{k-\alpha}_c
  L^l_d
  L^m_e
  D^{\alpha},
  \qquad
  D = -\tfrac13 ( 2 \, \mathrm{ad}_d + D_{b,c} ).
  \]
\end{proposition}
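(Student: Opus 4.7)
The plan is to combine Lemma~\ref{bigmorandilemma} with Lemma~\ref{cde-operator} and reduce the whole identity to an iterated commutator of $R_b$ with $L_c^k$. First, I would rewrite $D$ in operator form: from Lemma~\ref{morandilemma} we have $D_{b,c} = -\mathrm{ad}_{[b,c]} - 3[L_b,R_c] = -2\,\mathrm{ad}_d - 3[L_b,R_c]$, so
\[
D \;=\; -\tfrac13\bigl(2\,\mathrm{ad}_d + D_{b,c}\bigr) \;=\; [L_b,R_c] \;=\; [R_b,L_c].
\]
Applying Lemma~\ref{bigmorandilemma} with $s=b$ and $x = c^k d^l e^m$, and using Lemma~\ref{cde-operator} to expand $L_{c^k d^l e^m} = L_c^k L_d^l L_e^m$, the statement to prove becomes
\[
\sum_{q=0}^{j} \binom{j}{q}\, L_b^{j-q}\,
\bigl[\underbrace{R_b,\dots,R_b}_{q},\; L_c^k L_d^l L_e^m\bigr]
\;=\;
\sum_{\alpha=0}^{\min(j,k)}
\alpha!\binom{j}{\alpha}\binom{k}{\alpha}\,
L_b^{j-\alpha} L_c^{k-\alpha} L_d^l L_e^m D^{\alpha}.
\]

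This reduces to three batches of commutation relations: (I) $[R_b,L_d] = [R_b,L_e] = 0$, (II) $[D,L_c] = [D,L_d] = [D,L_e] = 0$, and (III) $[D,R_b] = 0$. Relations in (I) follow from Lemma~\ref{zeroderivations}, since $[b,d] = [b,e] = 0$ forces $D_{b,d} = -3[L_b,R_d]$ and $D_{b,e} = -3[L_b,R_e]$, hence $[L_b,R_d] = [L_b,R_e] = 0$; the identity $[L_s,R_t] = [R_s,L_t]$ then gives (I). For (II) and (III) I split $D = -\tfrac13(2\,\mathrm{ad}_d + D_{b,c})$ and handle each piece. Since $D_{b,c}$ is a derivation, $[D_{b,c},L_x] = L_{D_{b,c}(x)}$ and $[D_{b,c},R_x] = R_{D_{b,c}(x)}$; direct computation from $D_{b,c} = -\tfrac12\mathrm{ad}_{[b,c]} + \tfrac12[\mathrm{ad}_b,\mathrm{ad}_c]$ gives $D_{b,c}(s) = 0$ for all $s \in \{b,c,d,e\}$, killing those commutators. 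For $\mathrm{ad}_d$, the identity $[\mathrm{ad}_d,L_s](y) = (s,y,d) + [L_s,L_d](y) = [R_d,L_s](y) + [L_s,L_d](y)$ reduces the job to (I)-type facts, and both pieces vanish (the second by Lemma~\ref{cde-operator} for $s \in \{c,d,e\}$, or by $[L_b,L_d] = L_{[b,d]} - 2[R_b,L_d] = 0$ for $s = b$). A parallel calculation handles $[\mathrm{ad}_d,R_b] = 0$.

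Once (I)--(III) are in place, the rest is mechanical. Using (I), the factor $L_d^l L_e^m$ pulls out of the iterated commutator. A straightforward induction on $q$, with base $[R_b,L_c] = D$ and using $[D,L_c] = 0$ and $[D,R_b] = 0$, proves
\[
\bigl[\underbrace{R_b,\dots,R_b}_{q},\; L_c^k\bigr]
\;=\;
\frac{k!}{(k-q)!}\, L_c^{k-q}\, D^q
\qquad (0 \le q \le k),
\]
and zero for $q > k$. Substituting, commuting $D^q$ past $L_d^l L_e^m$ using (II), and invoking the identity $\binom{j}{q}\,\tfrac{k!}{(k-q)!} = q!\binom{j}{q}\binom{k}{q}$ yields the claim with $\alpha = q$. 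I expect the main obstacle to be the verification of $[D,L_c] = 0$: while $[D_{b,c},L_c] = L_{D_{b,c}(c)} = 0$ is immediate, the $\mathrm{ad}_d$-piece requires the non-obvious vanishing of the associator $(c,y,d) = [R_d,L_c](y)$ for every $y \in U(M)$, which ultimately rests on $D_{c,d} = 0$ from Lemma~\ref{zeroderivations}.
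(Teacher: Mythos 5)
Your proposal is correct and follows essentially the same route as the paper: expand via Lemma~\ref{bigmorandilemma} and Lemma~\ref{cde-operator}, identify $D=[R_b,L_c]$ from $D_{b,c}=-2\,\mathrm{ad}_d-3[R_b,L_c]$, establish that $D$ commutes with $R_b$ and $L_c$ (and that $R_b$ commutes with $L_d,L_e$), and then induct on the iterated commutator $[R_b,\dots,R_b,L_c^k]=q!\binom{k}{q}D^qL_c^{k-q}$. Your explicit check that $D$ also commutes with $L_d$ and $L_e$ is a small step the paper leaves implicit when moving $D^\alpha$ to the right of $L_d^lL_e^m$, but otherwise the two arguments coincide.
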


\begin{proof}
Lemmas \ref{cde-operator} and \ref{bigmorandilemma} give
  \[
  L_{ b^j c^k d^l e^m }
  =
  \sum^j_{\alpha=0}
  \binom{j}{\alpha}
  L^{j-\alpha}_b
  [
  \underbrace{R_b,\dots,R_b}_\alpha,
  L^k_c L^l_d L^m_e
  ].
  \]
By Lemmas \ref{morandilemma} and \ref{zeroderivations} we have
  \[
  [L_d, R_b] = -\tfrac13(\mathrm{ad}_{[d,b]} + D_{d,b}) = 0,
  \qquad
  [L_e, R_b] = -\tfrac13(\mathrm{ad}_{[e,b]} + D_{e,b}) = 0.
  \]
Thus $R_b$ commutes with $L_d$ and $L_e$, and so
  \[
  [ \underbrace{R_b,\dots,R_b}_\alpha, L^k_c L^l_d L^m_e ]
  =
  [ \underbrace{R_b,\dots,R_b}_\alpha, L^k_c] L^l_d L^m_e.
  \]
We have
  \[
  D_{b,c}
  =
  -\mathrm{ad}_{[b,c]} - 3[L_b, R_c]
  =
  -\mathrm{ad}_{[b,c]} - 3[R_b, L_c]
  =
  -2 \, \mathrm{ad}_d - 3 [R_b, L_c].
  \]
Hence $D = [ R_b, L_c ] = R_b L_c - L_c R_b$. We show that $D$ commutes with
$R_b$. For this we compute $[\mathrm{ad}_d, R_b]$ and $[D_{b,c}, R_b]$ using
Lemma \ref{morandilemma}:
  \allowdisplaybreaks
  \begin{align*}
  [ \mathrm{ad}_d, R_b ]
  &=
  [ \mathrm{ad}_d, \mathrm{ad}_b {+} L_b ]
  =
  [ \mathrm{ad}_d, \mathrm{ad}_b ] + [ \mathrm{ad}_d, L_b ]
  \\
  &=
  2 \, D_{d,b} + \mathrm{ad}_{[d,b]} + [R_d {-} L_d, L_b]
  =
  [R_d, L_b] - [L_d, L_b]
  \\
  &=
  [R_d, L_b] - L_{[d,b]} + 2 [R_d, L_b]
  =
  3 [R_d, L_b]= - D_{d,b} = 0,
  \\
  [ D_{b,c}, R_b ] (x)
  &=
  D_{b,c} R_b (x) - R_b D_{b,c} (x)
  =
  D_{b,c}(xb) - D_{b,c}(x)b
  \\
  &=
  D_{b,c}(x) b + x D_{b,c}(b) - D_{b,c}(x) b
  =
  x D_{b,c}(b) = 0.
  \end{align*}
Thus $[D, R_b]=0$. We show that $[ R_b, L^k_c ] = k D L^{k-1}_c$ by induction
on $k$; the case $k = 1$ is clear. By the inductive hypothesis we have
  \allowdisplaybreaks
  \begin{align*}
  [R_b, L^{k+1}_c]
  &=
  R_b L^{k+1}_c - L^{k+1}_c R_b
  =
  ( R_b L^k_c ) L_c - L^{k+1}_c R_b
  \\
  &=
  (L^k_c R_b + k D L^{k-1}_c) L_c - L^{k+1}_c R_b
  =
  L^k_c R_b L_c + k D L^k_c - L^{k+1}_c R_b
  \\
  &=
  L^k_c (L_c R_b + D) + k D L^k_c - L^{k+1}_c R_b
  =
  (k{+}1) D L^k_c,
  \end{align*}
since $D$ and $L_c$ commute:
  \allowdisplaybreaks
  \begin{align*}
  [ D, L_c ] (x)
  &=
  - \tfrac13 [ D_{b,c} + 2 \, \mathrm{ad}_d, L_c ] (x)
  =
  - \tfrac13 [ D_{b,c}, L_c ] (x) - \tfrac23 [ \mathrm{ad}_d, L_c ] (x)
  \\
  &=
  - \tfrac13 \big( D_{b,c} (cx) - c D_{b,c} (x) \big)
  - \tfrac23 \big( [ R_d, L_c ] - [ L_d, L_c] \big) (x)
  \\
  &=
  - \tfrac13 D_{b,c}(c) x = 0.
  \end{align*}
(Recall that $[ R_d, L_c ] = 0$ and $[ L_d, L_c] = 0$ by the proof of Lemma
\ref{cde-operator}.) Finally, we show by induction on $\alpha$ that
  \[
  [ \underbrace{R_b,\dots,R_b}_\alpha, L^k_c]
  =
  \alpha! \binom{k}{\alpha}
  D^{\alpha} L^{k-\alpha}_c.
  \]
This is clear for $\alpha = 0$ and we just proved it for $\alpha = 1$. Since $[
R_b, D ] = 0$ we have
  \allowdisplaybreaks
  \begin{align*}
  [ \underbrace{R_b,\dots,R_b}_{\alpha+1}, L^k_c ]
  &=
  [ R_b, [ \underbrace{R_b\dots,R_b}_\alpha, L^k_c ] ]
  =
  \alpha! \binom{k}{\alpha}
  [ R_b, D^{\alpha} L^{k-\alpha}_c ]
  \\
  &=
  \alpha! \binom{k}{\alpha}
  D^{\alpha} [ R_b, L^{k-\alpha}_c ]
  =
  \alpha! \binom{k}{\alpha} (k{-}\alpha)
  D^{\alpha+1} L^{k-(\alpha+1)}_c
  \\
  &=
  (\alpha{+}1)! \binom{k}{\alpha{+}1}
  D^{\alpha+1} L^{k-(\alpha+1)}_c.
  \end{align*}
The proof is complete.
\end{proof}


\section{Representation of $\mathbb{M}$ by differential operators}
\label{operatorsection}

We now return to the one-parameter family $\mathbb{M} = \mathbb{M}_\gamma$ of
5-dimensional solvable Malcev algebras with structure constants in equation
\eqref{oneparameterfamily}.

\begin{notation} \label{operatorsnotation}
We have these linear operators on the polynomial algebra $P({\mathbb M})$:
  \begin{itemize}
  \item $I$ is the identity;
  \item $M_x$ is multiplication by $x \in \{ a,b,c,d,e\}$;
  \item $D_x$ is differentiation with respect to $x \in \{a,b,c,d,e\}$ (it
      is important to distinguish between this $D_x$ and the previous
      $D_{s,t}$);
  \item $S$ is the shift operator on the generator $a$: $S(a^i b^j c^k
      d^{l} e^m) = ( a + 1 )^i b^j c^k d^{l} e^m$; more generally, we use
      exponential notation and write $S^\alpha$ ($\alpha \in F$) for the
      shift-by-$\alpha$ operator $S_{\alpha} (a^i b^j c^k d^{l} e^m) = ( a
      + \alpha )^i b^j c^k d^{l} e^m$.
  \end{itemize}
\end{notation}

\begin{notation}
We use the linear isomorphism $\phi\colon U(\mathbb{M}) \to P(\mathbb{M})$ to
define operators on $P(\mathbb{M})$ expressing products and commutators in
$U(\mathbb{M})$:
  \begin{itemize}
  \item $L_x$ is left multiplication: $L_x(f) = \phi( x \phi^{-1}(f) )$ for
      $x \in \mathbb{M}$, $f \in P(\mathbb{M})$;
  \item $R_x$ is right multiplication: $R_x(f) = \phi( \phi^{-1}(f) x )$
      for $x \in \mathbb{M}$, $f \in P(\mathbb{M})$;
  \item $\rho_x = R_x - L_x$ is the adjoint: $\rho_x(f) = \phi( [
      \phi^{-1}(f), x ] )$ for $x \in \mathbb{M}$, $f \in P(\mathbb{M})$.
  \end{itemize}
\end{notation}

\begin{lemma} \label{a-formulas}
As operators on $P(\mathbb{M})$ we have
  \allowdisplaybreaks
  \begin{align*}
  L_a
  &=
  M_a,
  \qquad
  \rho_a
  =
  M_b D_b + M_c D_c - M_d D_d + \gamma M_e D_e - 3 D_b D_c M_d,
  \\
  R_a
  &=
  M_a + M_b D_b + M_c D_c - M_d D_d + \gamma M_e D_e -  3D_b D_c M_d.
  \end{align*}
\end{lemma}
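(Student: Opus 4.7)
The first formula, $L_a = M_a$, follows directly from the structure of the left-tapped basis: since $a$ is first in the chosen order on generators, the basis monomial $\overline{a^i b^j c^k d^l e^m}$ has the form $a(a(\cdots a(b(\cdots)) \cdots))$, and left multiplication by $a$ simply prepends one more $a$; under $\phi$ this is nothing but multiplication by $a$ on polynomials.

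For $R_a$ and $\rho_a$, the identity $R_a = L_a + \rho_a$ shows the two formulas are equivalent, so I focus on $\rho_a$. The strategy is to apply equation \eqref{rightbracket} repeatedly, peeling off one generator at a time from the outside of a left-tapped monomial. Taking $s = t = a$ gives $[a y, a] = a[y, a]$, because $[a,a] = 0$ and $D_{a,a} = 0$; hence $\rho_a$ commutes with $M_a$, and the problem reduces to computing $\rho_a$ on left-tapped monomials $y = b^j c^k d^l e^m$ lying in the subalgebra $U(L)$ generated by $\{b,c,d,e\}$.

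On $U(L)$ several useful simplifications from Lemma \ref{zeroderivations} and the derivation table are available. The derivations $D_{a,d}$ and $D_{a,e}$ kill each of $b,c,d,e$ and, being derivations, vanish on all of $U(L)$. Moreover $d$ and $e$ are central in $U(L)$: the relevant derivations $D_{d,t}$ and $D_{e,t}$ with $t\in L$ all vanish by Lemma \ref{zeroderivations}, and a parallel application of \eqref{rightbracket} with $s\in\{d,e\}$ inductively shows $[y,d]=[y,e]=0$ for $y\in U(L)$. Plugging $t = d$ or $t = e$ into \eqref{rightbracket} therefore gives the clean recursions $\rho_a(dy) = -dy + d\,\rho_a(y)$ and $\rho_a(ey) = -\gamma\, ey + e\,\rho_a(y)$, matching (up to a direct sign check on the generator $e$) the $-M_d D_d$ and $\gamma M_e D_e$ contributions in the stated formula.

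The main obstacle is the analogous computation with $t=b$ and $t=c$, where $D_{a,b}$ and $D_{a,c}$ do not vanish: by the derivation table and the Leibniz rule they act on $U(L)$ as the first-order differential operators $M_d D_c$ and $-M_d D_b$ respectively, and the correction term $-[y,[a,t]]$ brings in the auxiliary operators $\rho_b$ and $\rho_c$. My plan is to compute $\rho_b$ and $\rho_c$ on $U(L)$ in parallel, by the same peel-off method, and then handle the coupling between $b$ and $c$ via a double induction on $j$ and $k$. The $D_{a,b}, D_{a,c}$ contributions then combine with the $\rho_b,\rho_c$ corrections to collapse into precisely the coefficient $-3$ on the cross term $D_b D_c M_d$; as a sanity check, the minimal case $\rho_a(bc) = 2bc - 3d$ follows directly from \eqref{rightbracket} and matches the proposed formula exactly. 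Finally, commuting with $L_a$ extends $\rho_a$ to all of $U(\mathbb{M})$, and $R_a = L_a + \rho_a$ is immediate.
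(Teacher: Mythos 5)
Your proposal is essentially the paper's own argument: reduce to $U(L)$ via $[ay,a]=a[y,a]$ (equation \eqref{rightbracket} with $s=t=a$, using $[a,a]=0$ and $D_{a,a}=0$), then compute $\rho_a$ on monomials $b^jc^kd^le^m$ by repeatedly applying \eqref{rightbracket}. The organizational differences are minor. The paper disposes of the whole base case $c^kd^le^m$ in one stroke by noting that $a,c,d,e$ span a nilpotent Lie subalgebra on which $\rho_a$ is an ordinary Lie derivation, and then runs a single induction on $j$; your double induction on $j$ and $k$ with a parallel computation of $\rho_b,\rho_c$ also works but is more machinery than needed, because the $t=b$ step of \eqref{rightbracket} only ever calls on $\rho_b$ \emph{restricted to} $U(L)$, where it equals $-2D_cM_d$ directly from the Lie structure of $L$ (not the full operator formula of Lemma \ref{b-formulas}). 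Your identifications $D_{a,b}|_{U(L)}=M_dD_c$ and $D_{a,c}|_{U(L)}=-M_dD_b$ and the resulting coefficient $-3$ on $D_bD_cM_d$ (via $-D_{a,b}(y)+[y,b]=-3k(\cdots)$) are exactly the paper's computation, and your check $\rho_a(bc)=2bc-3d$ is correct. The one item you must not leave as ``a direct sign check'': with the structure constants \eqref{oneparameterfamily} one has $\rho_a(e)=[e,a]=-\gamma e$, so your recursion $\rho_a(ey)=-\gamma\,ey+e\,\rho_a(y)$ is what the algebra actually gives and it produces $-\gamma M_eD_e$, not the $+\gamma M_eD_e$ of the statement; the paper's proof silently asserts $[e^m,a]=+m\gamma e^m$ in its $j=0$ base case. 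Since replacing $\gamma$ by $-\gamma$ merely relabels the one-parameter family, nothing structural is at stake, but you should resolve the sign explicitly (and note that it propagates to Lemma \ref{a-formulas} as printed) rather than defer it.
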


\begin{proof}
The claim for $L_a$ is trivial by our convention on basis monomials:
  \[
  L_a ( a^i b^j c^k d^l e^m )
  =
  a ( a^i b^j c^k d^l e^m )
  =
  a^{i+1} b^j c^k d^l e^m.
  \]
Since by definition $R_a = L_a + \rho_a$, it remains only to prove the claim
for $\rho_a$. We first show by induction on $i$ that
  \[
  [ a^i b^j c^k d^l e^m, a ]
  =
  a^i [ b^j c^k d^l e^m, a ].
  \]
For $i = 0$ the claim is trivial.  For the inductive step, equation
\eqref{rightbracket} gives
  \allowdisplaybreaks
  \begin{align*}
  &
  [ a^{i+1} b^j c^k d^l e^m, a ]
  =
  [ a a^i b^j c^k d^l e^m, a ]
  \\
  &=
  [a,a] a^i b^j c^k d^l e^m
  +
  a [ a^i b^j c^k d^l e^m, a ]
  -
  D_{a,a} ( a^i b^j c^k d^l e^m )
  -
  [ a^i b^j c^k d^l e^m, [a,a] ]
  \\
  &=
  a [ a^i b^j c^k d^l e^m, a ].
  \end{align*}
By definition of $M_x$ and $D_x$ we have
  \allowdisplaybreaks
  \begin{align*}
  &
  ( M_b D_b + M_c D_c - M_d D_d + \gamma M_e D_e - 3 D_b D_c M_d )
  ( b^j c^k d^l e^m )
  \\
  &=
  ( j+k-l+m \gamma ) b^j c^k d^l e^m
  -
  3 j k b^{j-1} c^{k-1} d^{l+1} e^m,
  \end{align*}
and so it now remains only to show by induction on $j$ that
  \[
  [ b^j c^k d^l e^m, a ]
  =
  ( j+k-l+m \gamma ) b^j c^k d^l e^m
  -
  3 j k b^{j-1} c^{k-1} d^{l+1} e^m.
  \]
For $j = 0$ we use the fact that $a$, $c$, $d$, $e$ span a Lie subalgebra of
$\mathbb{M}$ (a nilpotent Lie algebra, the split extension of the 1-dimensional
Lie algebra with basis $a$ by the module with basis $c$, $d$, $e$). Clearly
$\rho_a$ is a derivation of this Lie algebra, and we have $[x,[x,a]] = 0$ for
$x  = c, d, e$. Therefore
  \[
  [ c^k d^l e^m, a ]
  =
  [ c^k, a ] d^l e^m + c^k [ d^l, a ] e^m + c^k d^l [ e^m, a ]
  =
  ( k - l + m \gamma ) c^k d^l e^m.
  \]
For the inductive step, equation \eqref{rightbracket} gives
  \allowdisplaybreaks
  \begin{align*}\
  &
  [ b^{j+1} c^k d^l e^m, a ]
  =
  [ b b^j c^k d^l e^m, a ]
  \\
  &=
  [b,a] b^j c^k d^l e^m
  +
  b [ b^j c^k d^l e^m, a ]
  -
  D_{a,b}( b^j  c^k d^l e^m )
  -
  [ b^j c^k d^l e^m, [a,b] ]
  \\
  &=
  b^{j+1} c^k d^l e^m
  +
  b [ b^j c^k d^l e^m, a ]
  -
  D_{a,b}( b^j  c^k d^l e^m )
  +
  [ b^j c^k d^l e^m, b ].
  \end{align*}
Since $b$, $c$, $d$, $e$ span a Lie subalgebra of $\mathbb{M}$, the structure
constants give
  \[
  [ b^j c^k d^l e^m, b ]
  =
  - 2k b^j c^{k-1} d^{l+1} e^m.
  \]
Furthermore, since $D_{a,b}(b) = 0$, $D_{a,b}(c) = d$, $D_{a,b}(d) = 0$,
$D_{a,b}(e) = 0$, we obtain
  \allowdisplaybreaks
  \begin{align*}
  &
  D_{a,b}( b^j  c^k d^l e^m )
  \\
  &=
  D_{a,b}( b^j ) c^k d^l e^m
  +
  b^j D_{a,b}( c^k ) d^l e^m
  +
  b^j  c^k D_{a,b}( d^l ) e^m
  +
  b^j  c^k d^l D_{a,b}( e^m )
  \\
  &=
  b^j D_{a,b}( c^k ) d^l e^m
  =
  k b^j c^{k-1} d^{l+1} e^m.
  \end{align*}
Combining these results gives
  \[
  [ b^{j+1} c^k d^l e^m, a ]
  =
  b^{j+1} c^k d^l e^m
  +
  b [ b^j c^k d^l e^m, a ]
  -
  3 k
  b^j c^{k-1} d^{l+1} e^m.
  \]
Now the inductive hypothesis gives
  \allowdisplaybreaks
  \begin{align*}
  &
  [ b^{j+1} c^k d^l e^m, a ]
  =
  b^{j+1} c^k d^l e^m
  +
  b [ b^j c^k d^l e^m, a ]
  -
  3 k
  b^j c^{k-1} d^{l+1} e^m
  \\
  &=
  b^{j+1} c^k d^l e^m
  +
  b
  \big(
  ( j{+}k{-}l{+}m \gamma )
  b^j c^k d^l e^m - 3 j k b^{j-1} c^{k-1} d^{l+1} e^m
  \big)
  \\
  &\quad
  -
  3 k
  b^j c^{k-1} d^ {l+1} e^m
  \\
  &=
  ( j+1+k-l+m \gamma )  b^{j+1} c^k d^l e^m
  -
  3 (j+1) k b^{j} c^{k-1} d^{l+1} e^m.
  \end{align*}
The proof is complete.
\end{proof}

\begin{lemma} \label{b-formulas}
As operators on $P(\mathbb{M})$ we have
  \allowdisplaybreaks
  \begin{alignat*}{2}
  L_b
  &=
  S M_b +  (  S^{-1} - S ) D_c M_d,
  &\qquad
  \rho_b
  &=
  ( I - S ) M_b + ( S - 2 S^{-1} -  I ) D_c M_d,
  \\
  R_b
  &=
  M_b - ( S^{-1} + I ) D_c M_d,
  &\qquad
  D_{a,b}
  &=
  ( I - S ) M_b + ( S + S^{-1} - I ) D_c M_d.
  \end{alignat*}
\end{lemma}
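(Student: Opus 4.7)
The plan is to prove the four formulas simultaneously by induction on the exponent $i$ of $a$ in basis monomials $a^i b^j c^k d^l e^m$ of $U(\mathbb{M})$. Simultaneous induction will be needed because the recurrences for $L_b$, $\rho_b$, and $D_{a,b}$ at the $i$-level will mutually depend on all three at the $(i-1)$-level; $R_b$ will then be recovered from $R_b = L_b + \rho_b$.

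For the base case $i = 0$, I would work inside the subalgebra of $U(\mathbb{M})$ generated by the nilpotent Lie subalgebra $L = \mathrm{span}\{b,c,d,e\}$, in which the only nonzero bracket is $[b,c] = 2d$. Using Lemma \ref{cde-operator} together with a direct computation from this bracket, the intended intermediate identities are
\[
b \cdot b^j c^k d^l e^m = b^{j+1} c^k d^l e^m, \qquad (b^j c^k d^l e^m) \cdot b = b^{j+1} c^k d^l e^m - 2k b^j c^{k-1} d^{l+1} e^m.
\]
Applying the Leibniz rule to $D_{a,b}$, with $D_{a,b}(c) = d$ from Table \ref{derivationtable} (and $D_{a,b}$ vanishing on $b, d, e$), would give $D_{a,b}(b^j c^k d^l e^m) = k b^j c^{k-1} d^{l+1} e^m$. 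Since $S^{\pm 1}$ act as the identity on polynomials free of $a$, each of the four claimed formulas will hold at $i = 0$. For the inductive step I would write $y = a^{i-1} b^j c^k d^l e^m$ and derive four recurrences: from the derivation property, $D_{a,b}(ay) = -L_b(y) + M_a D_{a,b}(y)$; from equation \eqref{rightbracket} with $(s,t)=(b,a)$ and $[a,b]=-b$, $\rho_b(ay) = -L_b(y) + (M_a - I)\rho_b(y) + D_{a,b}(y)$; from equation \eqref{leftproduct} with $(s,t)=(b,a)$, $L_b(ay) = (M_a + I) L_b(y) + \tfrac23 \rho_b(y) - \tfrac23 D_{a,b}(y)$; and $R_b = L_b + \rho_b$. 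Substituting the inductive hypothesis and simplifying using $S M_a = (M_a + I) S$ and $S^{-1} M_a = (M_a - I) S^{-1}$, together with the fact that $S^{\pm 1}$ commute with $M_b$, $D_c$, and $M_d$, I would then verify each claimed formula at the $i$-level.

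The main obstacle will be the bookkeeping in the $L_b$ recurrence, which couples $\rho_b(y)$ and $D_{a,b}(y)$. The crucial simplification will be the operator identity $\rho_b - D_{a,b} = -3 S^{-1} D_c M_d$, obtained by subtracting the claimed formulas; it collapses the $L_b$ recurrence to $L_b(ay) = (M_a + I) L_b(y) - 2 S^{-1} D_c M_d(y)$. This identity is consistent with the operator identity $D_{a,b} = \rho_b - 3[L_a, R_b]$ from Lemma \ref{morandilemma}, since $[L_a, R_b] = [M_a, R_b] = -S^{-1} D_c M_d$ via $[M_a, S^{-1}] = S^{-1}$, which provides a useful consistency check before committing to the telescoping arithmetic at the $(a\pm 1)^i$ level.
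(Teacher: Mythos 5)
Your proposal is correct and follows essentially the same route as the paper: the same simultaneous induction on the exponent of $a$ for $L_b$, $\rho_b$, $D_{a,b}$ (with $R_b = L_b + \rho_b$), the same base case on $U(L)$, and the same three recurrences derived from \eqref{rightbracket}, \eqref{leftproduct} and Table \ref{derivationtable}. Your extra observation that $\rho_b - D_{a,b} = -3\,S^{-1} D_c M_d$ collapses the $L_b$ recurrence is a valid streamlining of the same computation the paper carries out term by term.
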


\begin{proof}
Since $R_b = L_b + \rho_b$, it suffices to prove the formulas for $L_b$,
$\rho_b$ and $D_{a,b}$. For this we set $y = a^i b^j c^k d^l e^m$ and do
simultaneous induction on $i$ using the equations
  \allowdisplaybreaks
  \begin{alignat*}{2}
  b(ay)
  &=
  a(by) + by - \tfrac23 D_{a,b}(y) + \tfrac23 [y,b],
  &\qquad
  D_{a,b}(ay)
  &=
  - by + a D_{a,b}(y),
  \\
  [ ay, b ]
  &=
  -by + a[y,b] + D_{a,b}(y) - [y,b],
  \end{alignat*}
which follow from \eqref{rightbracket}, \eqref{leftproduct} and Table
\ref{derivationtable}. The basis of the induction consists of these equations
from the proof of Lemma \ref{a-formulas}:
  \allowdisplaybreaks
  \begin{alignat*}{2}
  L_b ( b^j c^k d^l e^m )
  &=
  b^{j+1} c^k d^l e^m,
  &\qquad
  D_{a,b}( b^j  c^k d^l e^m )
  &=
  k b^j c^{k-1} d^{l+1} e^m,
  \\
  \rho_b( b^j c^k d^l e^m)
  &=
  - 2k b^j c^{k-1} d^{l+1} e^m.
  \end{alignat*}
We now prove case $i+1$ of each equation separately, but in each case the
inductive hypothesis is case $i$ of all three equations.  First, the formula
for $L_b$:
  \allowdisplaybreaks
  \begin{align*}
  &
  b ( a^{i+1} b^j c^k d^l e^m )
  \\
  &=
  a(b ( a^i b^j c^k d^l e^m ) )
  +
  b( a^i b^j c^k d^l e^m )
  -
  \tfrac23
  D_{a,b}( a^i b^j c^k d^l e^m )
  +
  \tfrac23
  [ a^i b^j c^k d^l e^m, b ]
  \\
  &=
  ( M_a + I )
  ( S M_b + ( S^{-1} - S ) D_c M_d )
  ( a^i b^j c^k d^l e^m )
  \\
  &\quad
  -
  \tfrac23
  (
  ( I - S ) M_b + ( S + S^{-1} - I ) D_c M_d
  )
  ( a^i b^j c^k d^l e^m )
  \\
  &\quad
  +
  \tfrac23
  (
  ( I - S ) M_b + ( S - 2 S^{-1} -  I ) D_c M_d
  )
  ( a^i b^j c^k d^l e^m )
  \\
  &=
  ( M_a + I ) ( S M_b )
  ( a^i b^j c^k d^l e^m )
  \\
  &\quad
  +
  (
  ( M_a {+} I ) ( S^{-1} {-} S )
  -
  \tfrac23
  ( S {+} S^{-1} {-} I )
  +
  \tfrac23
  ( S {-} 2S^{-1} {-} I )
  )
  ( D_c M_d )
  ( a^i b^j c^k d^l e^m )
  \\
  &=
  ( S M_a M_b )
  ( a^i b^j c^k d^l e^m )
  +
  ( (M_a - I) S^{-1} - (M_a + I) S )
  ( D_c M_d )
  ( a^i b^j c^k d^l e^m )
  \\
  &=
  ( S M_a M_b )
  ( a^i b^j c^k d^l e^m )
  +
  ( S^{-1} M_a - S M_a )
  ( D_c M_d )
  ( a^i b^j c^k d^l e^m )
  \\
  &=
  ( S M_b M_a )
  ( a^i b^j c^k d^l e^m )
  +
  ( ( S^{-1} - S ) M_a )
  ( D_c M_d )
  ( a^i b^j c^k d^l e^m )
  \\
  &=
  (
  S M_b +  (  S^{-1} - S ) D_c M_d
  )
  ( a^{i+1} b^j c^k d^l e^m ).
  \end{align*}
Next, the formula for $\rho_b$:
  \allowdisplaybreaks
  \begin{align*}
  &
  [ a^{i+1} b^j c^k d^l e^m , b ]
  \\
  &=
  -
  b ( a^i b^j c^k d^l e^m )
  +
  a [ a^i b^j c^k d^l e^m, b ]
  +
  D_{a,b} ( a^i b^j c^k d^l e^m )
  -
  [ a^i b^j c^k d^l e^m, b ]
  \\
  &=
  -
  ( S M_b + ( S^{-1} - S ) D_c M_d )
  ( a^i b^j c^k d^l e^m )
  \\
  &\quad
  +
  ( M_a - I )
  (
  ( I - S ) M_b + ( S - 2 S^{-1} - I ) D_c M_d
  )
  ( a^i b^j c^k d^l e^m )
  \\
  &\quad
  +
  (
  ( I - S ) M_b + ( S + S^{-1} - I ) D_c M_d
  )
  ( a^i b^j c^k d^l e^m )
  \\
  &=
  ( - S + M_a (I - S) ) M_b
  ( a^i b^j c^k d^l e^m )
  \\
  &\quad
  +
  ( ( M_a + I ) S - 2 ( M_a - I ) S^{-1} - M_a )
  ( D_c M_d )
  (a^i b^j c^k d^l e^m)
  \\
  &=
  ( M_a - ( M_a + I ) S ) M_b
  ( a^i b^j c^k d^l e^m )
  \\
  &\quad
  +
  ( ( M_a + I ) S - 2 ( M_a - I ) S^{-1} - M_a )
  ( D_c M_d )
  ( a^i b^j c^k d^l e^m )
  \\
  &=
  ( M_a - S M_a ) M_b
  ( a^i b^j c^k d^l e^m )
  +
  ( S M_a - 2 S^{-1} M_a - M_a ) ( D_c M_d )
  ( a^i b^j c^k d^l e^m )
  \\
  &=
  ( I - S ) ( M_a M_b )
  ( a^i b^j c^k d^l e^m )
  +
  ( S M_a - 2 S^{-1} M_a - M_a )
  ( D_c M_d )
  ( a^i b^j c^k d^l e^m )
  \\
  &=
  ( I - S ) ( M_b M_a )
  ( a^i b^j c^k d^l e^m )
  +
  ( S - 2 S^{-1} - I ) ( M_a D_c M_d )
  ( a^i b^j c^k d^l e^m )
  \\
  &=
  (
  ( I - S ) M_b + ( S - 2 S^{-1} - I ) D_c M_d
  )
  ( a^{i+1} b^j c^k d^l e^m ).
  \end{align*}
Finally, the formula for $D_{a,b}$:
  \allowdisplaybreaks
  \begin{align*}
  &
  D_{a,b} ( a^{i+1} b^j c^k d^l e^m )
  =
  - b ( a^i b^j c^k d^l e^m ) + a D_{a,b} ( a^i b^j c^k d^l e^m )
  \\
  &=
  -
  ( S M_b + ( S^{-1} - S ) D_c M_d )
  ( a^i b^j c^k d^l e^m )
  \\
  &\quad
  +
  M_a ( ( I - S ) M_b + ( S + S^{-1} - I ) D_c M_d )
  ( a^i b^j c^k d^l e^m )
  \\
  &=
  ( - S + M_a (I - S ) ) M_b
  ( a^i b^jc^k d^l e^m )
  \\
  &\quad
  +
  ( ( I + M_a ) S + ( M_a - I ) S^{-1} - M_a ) ( D_c M_d )
  ( a^i b^j c^k d^l e^m )
  \\
  &=
  ( M_a - (I + M_a) S ) M_b
  ( a^i b^jc^k d^l e^m )
  \\
  &\quad
  +
  ( ( I + M_a ) S + ( M_a - I ) S^{-1} - M_a ) ( D_c M_d )
  ( a^i b^j c^k d^l e^m )
  \\
  &=
  ( M_a - S M_a ) M_b
  ( a^i b^jc^k d^l e^m )
  +
  ( S M_a + S^{-1} M_a - M_a ) ( D_c M_d )
  ( a^i b^j c^k d^l e^m )
  \\
  &=
  ( I - S ) ( M_a M_b )
  ( a^i b^j c^k d^l e^m )
  +
  ( S + S^{-1} - I ) ( M_a D_c M_d )
  ( a^i b^j c^k d^l e^m )
  \\
  &=
  ( ( I - S ) M_b + ( S + S^{-1} - I ) D_c M_d )
  ( a^{i+1} b^j c^k d^l e^m ).
  \end{align*}
The proof is complete.
\end{proof}

\begin{lemma} \label{c-formulas}
As operators on $P(\mathbb{M})$ we have
  \allowdisplaybreaks
  \begin{alignat*}{2}
  L_c
  &=
  S M_c - ( S + S^{-1} ) D_b M_d,
  &\qquad
  \rho_c
  &=
  ( I - S ) M_c + ( S + 2 S^{-1} - I ) D_b M_d,
  \\
  R_c
  &=
  M_c + ( S^{-1} - I ) D_b M_d,
  &\qquad
  D_{a,c}
  &=
  ( I - S ) M_c + (  S - S^{-1} - I  ) D_b M_d.
  \end{alignat*}
\end{lemma}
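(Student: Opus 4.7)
The proof closely parallels that of Lemma \ref{b-formulas}, with $c$ in place of $b$, and with the sign flips coming from $[c,b] = -2d$ (versus $[b,c] = 2d$) and from the entry $D_{a,c}(b) = -d$ in Table \ref{derivationtable} (versus $D_{a,b}(c) = d$). Since $R_c = L_c + \rho_c$, it suffices to establish the three formulas for $L_c$, $\rho_c$, and $D_{a,c}$, and I prove them by simultaneous induction on $i$ for monomials $y = a^i b^j c^k d^l e^m$.

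For the base case $i = 0$, I use that $b,c,d,e$ span a nilpotent Lie subalgebra in which $[b,c] = 2d$ and all other brackets vanish. A short induction on $j$ based on $cb = bc - 2d$ and $bd = db$ gives $c b^j = b^j c - 2j b^{j-1} d$, and hence
\begin{align*}
L_c(b^j c^k d^l e^m) &= b^j c^{k+1} d^l e^m - 2j b^{j-1} c^k d^{l+1} e^m, \\
\rho_c(b^j c^k d^l e^m) &= 2j b^{j-1} c^k d^{l+1} e^m, \\
D_{a,c}(b^j c^k d^l e^m) &= -j b^{j-1} c^k d^{l+1} e^m,
\end{align*}
where the third line uses that $D_{a,c}$ is a derivation on $U(\mathbb{M})$ with $D_{a,c}(b) = -d$ and $D_{a,c}(c) = D_{a,c}(d) = D_{a,c}(e) = 0$. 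These agree with the proposed operator formulas, because $S$ and $S^{-1}$ act as the identity on monomials without $a$.

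For the inductive step from $i$ to $i+1$, I apply equations \eqref{rightbracket} and \eqref{leftproduct} with $s = c$, $t = a$, using $[c,a] = c$, $D_{c,a} = -D_{a,c}$, and $D_{a,c}(a) = -c$ (read off from Table \ref{derivationtable}), to obtain the recursions
\begin{align*}
c(ay) &= a(cy) + cy - \tfrac{2}{3} D_{a,c}(y) + \tfrac{2}{3} [y,c], \\
D_{a,c}(ay) &= -cy + a D_{a,c}(y), \\
[ay, c] &= -cy + a[y,c] + D_{a,c}(y) - [y,c].
\end{align*}
Substituting the inductive expressions for $L_c(y)$, $\rho_c(y)$, and $D_{a,c}(y)$, and simplifying with the commutation identities $S M_a = (M_a + I) S$ and $S^{-1} M_a = (M_a - I) S^{-1}$, together with the fact that $M_a$ commutes with $M_b$, $M_c$, $M_d$, $D_b$, and $D_c$, reproduces the claimed operator expressions on $a^{i+1} b^j c^k d^l e^m$, in exact analogy with the verification in Lemma \ref{b-formulas}. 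The principal obstacle is purely clerical, namely keeping track of the sign changes through the operator arithmetic; no new ideas beyond those of the $b$ case are required.
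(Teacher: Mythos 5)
Your proof is correct and follows the paper's argument essentially verbatim: the same simultaneous induction on $i$, with the same three recursions for $c(ay)$, $[ay,c]$ and $D_{a,c}(ay)$ derived from \eqref{rightbracket}, \eqref{leftproduct} and Table \ref{derivationtable}, closed by the commutation rules $S^{\pm 1}M_a=(M_a\pm I)S^{\pm 1}$. One remark: your base case $c(b^jc^kd^le^m)=b^jc^{k+1}d^le^m-2j\,b^{j-1}c^kd^{l+1}e^m$ is the correct one (it agrees with $cb=bc-2d$ and with the stated formula for $L_c$ at $i=0$), whereas the paper's displayed base case has coefficient $-j$, which appears to be a typo.
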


\begin{proof}
As before, it suffices to prove the formulas for $L_c$, $\rho_c$ and $D_{a,c}$.
The basis of the induction consists of the following equations which follow
easily from the fact that $b$, $c$, $d$, $e$ span a Lie subalgebra of
$\mathbb{M}$:
  \allowdisplaybreaks
  \begin{alignat*}{2}
  &
  c ( b^j c^k d^l e^m )
  =
  b^j c^{k+1} d^l e^m - j b^{j-1} c^k d^{l+1} e^m,
  &\qquad
  &
  [ b^j c^k d^l e^m, c ]
  =
  2 j b^{j-1} c^k d^{l+1} e^m,
  \\
  &
  D_{a,c}( b^j c^k d^l e^m )
  =
  - j b^{j-1} c^k d^{l+1} e^m.
  \end{alignat*}
The strategy of the proof is the same as for Lemma \ref{b-formulas}. First, the
formula for $L_c$:
  \allowdisplaybreaks
  \begin{align*}
  &
  c ( a^{i+1} b^j c^k d^l e^m )
  \\
  &=
  a( c ( a^i b^j c^k d^l e^m ) )
  +
  c ( a^i b^j c^k d^l e^m )
  -
  \tfrac23 D_{a,c}( a^i b^j c^k d^l e^m )
  +
  \tfrac23 [ a^i b^j c^k d^l e^m, c ]
  \\
  &=
  ( M_a + I )
  (
  S M_c -( S + S^{-1} ) D_b M_d
  )
  ( a^i b^j c^k d^l e^m )
  \\
  &\quad
  -
  \tfrac23
  (
  ( I - S ) M_c + ( S - S^{-1} - I ) D_b M_d
  )
  ( a^i b^j c^k d^l e^m )
  \\
  &\quad
  +
  \tfrac23
  (
  ( I - S ) M_c + ( S + 2 S^{-1} - I ) D_b M_d
  )
  ( a^i b^j c^k d^l e^m )
  \\
  &=
  (
  ( M_a + I ) ( S M_c )
  -
  ( ( M_a + I ) ( S + S^{-1} ) - 2 S^{-1} )
  ( D_b M_d )
  )
  ( a^i b^j c^k d^l e^m )
  \\
  &=
  (
  ( M_a + I ) ( S M_c )
  -
  ( ( M_a + I ) S + ( M_a - I ) S^{-1} ) ( D_b M_d )
  )
  ( a^i b^j c^k d^l e^m )
  \\
  &=
  (
  ( S M_a ) M_c - ( S M_a + S^{-1} M_a ) D_b M_d
  )
  ( a^i b^j c^k d^l e^m )
  \\
  &=
  (
  ( S M_c ) M_a - ( S + S^{-1} ) M_a D_b M_d
  )
  ( a^i b^j c^k d^l e^m )
  \\
  &=
  ( S M_c - ( S + S^{-1} ) D_b M_d )
  ( a^{i+1} b^j c^k d^l e^m ).
  \end{align*}
Next, the formula for $\rho_c$:
  \allowdisplaybreaks
  \begin{align*}
  &
  [ a^{i+1} b^j c^k d^l e^m, c ]
  \\
  &=
  -
  c (a^i b^j c^k d^l e^m)
  +
  a [ a^i b^j c^k d^l e^m, c ]
  +
  D_{a,c} ( a^i b^j c^k d^l e^m )
  -
  [ a^i b^j c^k d^l e^m, c ]
  \\
  &=
  ( -S M_c + ( S + S^{-1} ) D_b M_d )
  ( a^i b^j c^k d^l e^m )
  \\
  &\quad
  +
  ( M_a - I )
  ( ( I - S ) M_c + ( S + 2 S^{-1} - I ) D_b M_d )
  ( a^i b^j c^k d^l e^m )
  \\
  &\quad
  +
  ( ( I - S ) M_c + ( S - S^{-1} - I ) D_b M_d )
  ( a^i b^j c^k d^l e^m )
  \\
  &=
  ( - S + M_a ( I - S ) ) M_c
  (a^i b^j c^k d^l e^m)
  \\
  &\quad
  +
  ( S + S^{-1} + ( M_a - I ) ( S {+} 2 S^{-1} {-} I ) + ( S {-} S^{-1} {-} I ) )
  ( D_b M_d )
  (a^i b^j c^k d^l e^m)
  \\
  &=
  ( M_a - ( M_a + I ) S ) M_c
  ( a^i b^j c^k d^l e^m )
  \\
  &\quad
  +
  ( ( M_a + I ) S + 2 ( M_a - I ) S^{-1} - M_a )
  ( D_b M_d )
  ( a^i b^j c^k d^l e^m )
  \\
  &=
  ( M_a - S M_a ) M_c
  ( a^i b^j c^k d^l e^m )
  +
  ( S M_a + 2 S^{-1} M_a - M_a )
  ( D_b M_d )
  ( a^i b^j c^k d^l e^m )
  \\
  &=
  ( I - S ) ( M_a M_c )
  ( a^i b^j c^k d^l e^m )
  +
  ( S + 2 S^{-1} - I )
  ( M_a D_b M_d )
  ( a^i b^j c^k d^l e^m )
  \\
  &=
  (
  ( I - S ) M_c + ( S + 2 S^{-1} - I ) ( D_b M_d )
  )
  (a^{i+1} b^j c^k d^l e^m).
  \end{align*}
Finally, the formula for $D_{a,c}$:
  \allowdisplaybreaks
  \begin{align*}
  &
  D_{a,c} (a^{i+1} b^j c^k d^l e^m)
  =
  - c ( a^i b^j c^k d^l e^m )
  +
  a D_{a,c} ( a^i b^j c^k d^l e^m )
  \\
  &=
  -
  ( S M_c - ( S + S^{-1} ) D_b M_d )
  ( a^i b^j c^k d^l e^m )
  \\
  &\quad
  +
  M_a
  ( ( I - S ) M_c + ( S - S^{-1} - I ) D_b M_d )
  ( a^i b^j c^k d^l e^m )
  \\
  &=
  (
  ( - S {+} M_a ( I {-} S ) ) M_c
  +
  ( S {+} S^{-1} {+} M_a S {-} M_a S^{-1} {-} M_a )
  ( D_b M_d )
  )
  ( a^i b^j c^k d^l e^m )
  \\
  &=
  ( ( ( I - S ) M_a ) M_c + ( S M_a - S^{-1} M_a - M_a ) D_b M_d )
  ( a^i b^j c^k d^l e^m )
  \\
  &=
  \left(
  ( I - S ) M_c
  +
  (  S - S^{-1} - I  ) D_b M_d
  \right)
  ( a^{i+1} b^j c^k d^l e^m ).
  \end{align*}
The proof is complete.
\end{proof}

\begin{lemma} \label{d-formulas}
As operators on $P(\mathbb{M})$ we have
  \[
  L_d
  =
  S^{-1} M_d,
  \qquad
  \rho_d
  =
  ( I - S^{-1} ) M_d,
  \qquad
  R_d
  =
  M_d,
  \qquad
  D_{a,d}
  =
  ( S^{-1} - I ) M_d.
  \]
\end{lemma}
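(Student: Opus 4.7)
The plan is to follow the exact template of Lemmas~\ref{b-formulas} and \ref{c-formulas}: since $R_d = L_d + \rho_d$ by definition, it suffices to establish the formulas for $L_d$, $\rho_d$, $D_{a,d}$, and then $R_d = S^{-1} M_d + (I - S^{-1}) M_d = M_d$ drops out. I would run a simultaneous induction on $i$, the exponent of $a$ in the basis monomial $y = a^i b^j c^k d^l e^m$, proving all three formulas in parallel.

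For the base case $i = 0$, the monomial lies in the Lie subalgebra $L = \mathrm{span}\{b,c,d,e\}$, where $d$ is central (the only nonzero bracket is $[b,c] = 2d$). Hence $d \cdot b^j c^k d^l e^m = b^j c^k d^{l+1} e^m$ and $[b^j c^k d^l e^m, d] = 0$, while Table~\ref{derivationtable} gives $D_{a,d}$ vanishing on each of $b,c,d,e$, so $D_{a,d}(b^j c^k d^l e^m) = 0$. Since $S^{\pm 1}$ acts as the identity on monomials with no $a$, all three formulas reduce to these equalities, so the base case holds.

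For the inductive step I would set up the three recursions from \eqref{leftproduct}, \eqref{rightbracket}, and the derivation property of $D_{a,d}$, using $[a,d] = d$, $D_{a,d}(a) = -d$ from Table~\ref{derivationtable}:
\begin{align*}
d(ay) &= a(dy) - dy - \tfrac{2}{3} D_{a,d}(y) - \tfrac{2}{3}[y,d], \\
[ay,d] &= dy + a[y,d] + D_{a,d}(y) + [y,d], \\
D_{a,d}(ay) &= -dy + a\, D_{a,d}(y).
\end{align*}
Substituting the inductive hypotheses $L_d y = S^{-1} M_d y$, $\rho_d y = (I - S^{-1}) M_d y$, $D_{a,d} y = (S^{-1} - I) M_d y$, the $-\tfrac{2}{3} D_{a,d}(y)$ and $-\tfrac{2}{3}[y,d]$ terms in the first equation cancel, and similarly the $D_{a,d}(y) + [y,d]$ terms in the second equation cancel, leaving only $M_a$-combinations of $S^{-1} M_d$ and $M_d$. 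The commutation relation $M_a S^{-1} = S^{-1}(M_a + I)$ (from $a(a-1)^i - (a-1)^{i+1} = (a-1)^i$) collapses each expression to the desired form after using $M_a M_d = M_d M_a$.

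There is no real obstacle here; the computation is actually shorter than for Lemmas~\ref{b-formulas} and \ref{c-formulas} because $d$ is central in $L$, so the $D_b M_d$ and $D_c M_d$ correction terms that appeared there are absent, and the only commutator manipulation needed is the single identity $M_a S^{-1} = S^{-1}(M_a + I)$. The only point requiring a moment of care is the cancellation pattern $-\tfrac{2}{3}(S^{-1} - I) - \tfrac{2}{3}(I - S^{-1}) = 0$ and its analogue for $\rho_d$, which explains why the final expressions involve no fractional coefficients.
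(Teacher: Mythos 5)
Your proposal is correct and follows essentially the same route as the paper: the paper's proof of this lemma consists of exactly the three base-case identities you state ($d(b^jc^kd^le^m)=b^jc^kd^{l+1}e^m$, $[b^jc^kd^le^m,d]=0$, $D_{a,d}(b^jc^kd^le^m)=0$) followed by the remark that the simultaneous induction on $i$ proceeds as in Lemma~\ref{c-formulas}, which is precisely the computation you carry out. Your three recursions and the cancellations are checked correctly, and the single commutation identity $(M_a-I)S^{-1}=S^{-1}M_a$ indeed closes the induction.
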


\begin{proof}
The basis of the induction consists of the equations
  \[
  d ( b^j c^k d^l e^m ) = b^j c^k d^{l+1} e^m,
  \qquad
  [ b^j c^k d^l e^m, d ] = 0,
  \qquad
  D_{a,d} ( b^j c^k d^l e^m ) = 0.
  \]
The rest of the proof is similar to that of Lemma \ref{c-formulas}.
\end{proof}

\begin{lemma} \label{e-formulas}
As operators on $P(\mathbb{M})$ we have
  \[
  L_e
  =
  S^{-\gamma} M_e,
  \qquad
  \rho_e
  =
  ( I - S^{-\gamma} ) M_e,
  \qquad
  R_e
  =
  M_e,
  \qquad
  D_{a,e}
  =
  \gamma ( S^{-\gamma} - I ) M_e.
  \]
\end{lemma}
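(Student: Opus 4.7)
The plan is to mirror the strategy of Lemmas \ref{c-formulas} and \ref{d-formulas}: perform simultaneous induction on the exponent $i$ of $a$ in the basis monomial $y = a^i b^j c^k d^l e^m$, establishing the formulas for $L_e$, $\rho_e$ and $D_{a,e}$ in tandem. The formula for $R_e$ is then immediate from $R_e = L_e + \rho_e = S^{-\gamma} M_e + (I - S^{-\gamma}) M_e = M_e$, so no separate argument is needed.

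For the base case $i = 0$, I would note that $b, c, d, e$ span a Lie subalgebra of $\mathbb{M}$ in which $e$ is central, which gives $e \cdot b^j c^k d^l e^m = b^j c^k d^l e^{m+1}$ and $[b^j c^k d^l e^m, e] = 0$ at once. For $D_{a,e}$, Table \ref{derivationtable} shows that it annihilates each of $b, c, d, e$; since $D_{a,e}$ acts as a derivation of $U(M)$, it therefore kills every monomial $b^j c^k d^l e^m$, matching $\gamma(S^{-\gamma} - I) M_e$ evaluated on a monomial with no $a$.

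For the inductive step, I would specialize \eqref{leftproduct} and \eqref{rightbracket} to $s = e$, $t = a$ (using $[a,e] = \gamma e$ and $D_{e,a} = -D_{a,e}$), and use the derivation identity $D_{a,e}(ay) = -\gamma^2 ey + a D_{a,e}(y)$, where $D_{a,e}(a) = -\gamma^2 e$ is read from Table \ref{derivationtable}. The key simplification I expect is that, under the inductive hypothesis, $D_{a,e}(y) + \gamma \rho_e(y) = \gamma(S^{-\gamma} - I)M_e(y) + \gamma(I - S^{-\gamma})M_e(y) = 0$; this cancellation kills the correction terms in both \eqref{leftproduct} and \eqref{rightbracket}, reducing each inductive step to the clean identities $L_e(ay) = (M_a - \gamma) L_e(y)$ and $\rho_e(ay) = \gamma L_e(y) + M_a \rho_e(y)$, with the $D_{a,e}$ case handled directly by the derivation formula above.

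The remaining verification is routine: substitute the inductive hypothesis and apply the defining commutation rule $(M_a - \gamma) S^{-\gamma} = S^{-\gamma} M_a$ of the shift operator, together with $M_a M_e = M_e M_a$, to rearrange. I do not anticipate any real obstacle — the computation is formally identical to that of Lemma \ref{d-formulas} with $\gamma$ in place of $1$, and the only place a mistake might creep in is the sign bookkeeping around $D_{e,a} = -D_{a,e}$ and $[e,a] = -\gamma e$. The whole argument is noticeably shorter than the proofs of Lemmas \ref{b-formulas} and \ref{c-formulas} because $e$ commutes with $b, c, d$ in $\mathbb{M}$, so none of the $D_b M_d$ or $D_c M_d$ type correction terms ever appear.
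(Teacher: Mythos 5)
Your proposal is correct and follows essentially the same route as the paper: a simultaneous induction on $i$ with the identical base case ($b,c,d,e$ spanning a Lie subalgebra with $e$ central, $D_{a,e}$ vanishing on $U(L)$) and the same three recursions obtained by specializing \eqref{leftproduct}, \eqref{rightbracket} and the derivation property of $D_{a,e}$. Your observation that $D_{a,e}+\gamma\rho_e=0$ under the inductive hypothesis is a pleasant streamlining of the paper's direct substitution of all operator formulas, but it is the same computation.
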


\begin{proof}
The basis of the induction consists of the three equations
  \[
  D_{a,e}( b^j c^k d^l e^m ) = 0,
  \qquad
  e ( b^j c^k d^l e^m ) = b^j c^k d^l e^{m+1},
  \qquad
  [ b^j c^k d^l e^m, e ] = 0.
  \]
Recall that $S^\gamma$ is defined by $S^\gamma( a^i b^j c^k d^l e^m ) = ( a +
\gamma )^i b^j c^k d^l e^m$. We have
  \allowdisplaybreaks
  \begin{align*}
  &
  e (a^{i+1} b^j c^k d^l e^m)
  \\
  &=
  a ( e (a^i b^j c^k d^l e^m) )
  -
  \gamma
  e ( a^i b^j c^k d^l e^m )
  -
  \tfrac23
  D_{a,e} ( a^i b^j c^k d^l e^m )
  -
  \tfrac23 \gamma
  [ a^i b^j c^k d^l e^m, e ]
  \\
  &=
  (
  ( M_a - \gamma I ) S^{-\gamma} M_e
  -
  \tfrac23 \gamma
  ( S^{-\gamma} - I ) M_e
  -
  \tfrac23 \gamma
  ( I - S^{-\gamma} ) M_e
  )
  ( a^i b^j c^k d^l e^m )
  \\
  &=
  S^{-\gamma} M_a M_e (a^i b^j c^k d^l e^m)
  =
  S^{-\gamma} M_e (a^{i+1} b^j c^k d^l e^m),
  \\
  &
  [a^{i+1} b^j c^k d^l e^m, e]
  \\
  &=
  \gamma e(a^i b^j c^k d^l e^m)
  +
  a [ a^i b^j c^k d^l e^m, e ]
  +
  D_{a,e} ( a^i b^j c^k d^l e^m )
  +
  \gamma
  [ a^i b^j c^k d^l e^m, e ]
  \\
  &=
  (
  \gamma
  S^{-\gamma} M_e
  +
  M_a ( I {-} S^{-\gamma} ) M_e
  +
  \gamma
  ( S^{-\gamma} {-} I ) M_e
  +
  \gamma
  ( I {-} S^{-\gamma} ) M_e
  )
  ( a^i b^j c^k d^l e^m )
  \\
  &=
  ( \gamma S^{-\gamma} {+} M_a {-} M_a S^{-\gamma} ) M_e
  (a^i b^j c^k d^l e^m)
  =
  ( - ( M_a {-} \gamma I ) S^{-\gamma} {+} M_a ) M_e
  ( a^i b^j c^k d^l e^m )
  \\
  &=
  ( - S^{-\gamma} + I ) M_a M_e
  ( a^i b^j c^k d^l e^m )
  =
  ( I - S^{-\gamma} ) M_e
  ( a^{i+1} b^j c^k d^l e^m ),
  \\
  &
  D_{a,e} ( a^{i+1} b^j c^k d^l e^m )
  =
  -
  \gamma^2 e ( a^i b^j c^k d^l e^m )
  +
  a D_{a,e} ( a^i b^j c^k d^l e^m )
  \\
  &=
  -\gamma^2 S^{-\gamma}
  M_e ( a^i b^j c^k d^l e^m )
  +
  M_a \gamma ( S^{-\gamma} - I ) M_e
  ( a^i b^j c^k d^l e^m )
  \\
  &=
  \gamma
  ( - \gamma S^{-\gamma} M_e + M_a S^{-\gamma} M_e - M_a M_e )
  ( a^i b^j c^k d^l e^m )
  \\
  &=
  \gamma
  ( ( M_a - \gamma I ) S^{-\gamma} - M_a ) M_e
  ( a^i b^j c^k d^l e^m )
  =
  \gamma
  ( ( S^{-\gamma} M_a - M_a ) M_e )
  ( a^i b^j c^k d^l e^m )
  \\
  &=
  \gamma
  ( S^{-\gamma} - I) M_a M_e
  ( a^i b^j c^k d^l e^m )
  =
  \gamma
  ( S^{-\gamma} - I) M_e
  ( a^{i+1} b^j c^k d^l e^m ).
  \end{align*}
The proof is complete.
\end{proof}


\section{The center of the universal enveloping algebra}

Our next goal is to use the results of Section \ref{operatorsection} to compute
the center of $U({\mathbb M})$.  Let $Z(U)$ and $K(U)$ denote the center and
commutative center of $U(\mathbb{M})$. Shestakov and Zhelyabin
\cite{ShestakovZhelyabin} proved that
  \[
  Z(U) = K(U)
  =
  \{ \,
  n \in U(\mathbb{M})
  \mid
  [n,x] = 0 \, \text{for all} \, x\in \mathbb{M}
  \, \}.
  \]
$Z(U)$ is a characteristic subalgebra: it is stable under automorphisms of
$U(\mathbb{M})$.

\begin{theorem}
Let $\mathbb{M} = \mathbb{M}_\gamma$ belong to the one-parameter family
\eqref{oneparameterfamily} of solvable 5-dimensional Malcev algebras over a
field $F$. Then
  \[
  Z(U)
  =
  \begin{cases}
  F [ \, d^{\gamma m} e^m \, ]
  &\text{if $\gamma = l/m$ with $l, m \in \mathbb{Z}$, $(l,m) = 1$, $m > 0$};
  \\
  F
  &\text{if $\gamma \notin \mathbb{Q}$}.
  \end{cases}
  \]
\end{theorem}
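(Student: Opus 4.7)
The plan is to invoke the Shestakov--Zhelyabin identification of $Z(U)$ with the set of elements commuting with all of $\mathbb{M}$, that is, the common kernel of the right-adjoint operators $\rho_a,\rho_b,\rho_c,\rho_d,\rho_e$. Transporting through the isomorphism $\phi$, this becomes the problem of finding all polynomials $f\in P(\mathbb{M})=F[a,b,c,d,e]$ annihilated by each of the five explicit operators computed in Lemmas \ref{a-formulas}--\ref{e-formulas}.

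The first reduction uses $\rho_d=(I-S^{-1})M_d$: the condition $\rho_d f=0$ says that $df$ is invariant under the nontrivial shift $a\mapsto a-1$, so by comparing coefficients of powers of $a$ we see that $df$, and hence $f$, must be independent of $a$. Once $f$ is free of $a$, both $S$ and $S^{-\gamma}$ act as the identity on $f$, so the formulas of Lemmas \ref{b-formulas} and \ref{c-formulas} collapse to
\[
\rho_b f = -2\,d\,D_c f, \qquad \rho_c f = 2\,d\,D_b f,
\]
and these vanish iff $D_b f=D_c f=0$. Hence $f=f(d,e)$; Lemmas \ref{d-formulas} and \ref{e-formulas} are now automatic and yield no further constraint.

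Substituting $f=f(d,e)$ into Lemma \ref{a-formulas} kills the three terms containing $D_b$ or $D_c$, leaving the Euler-type equation $\rho_a f=-d\,D_d f+\gamma e\,D_e f=0$, whose monomial-wise meaning is $(\gamma m-l)\,d^l e^m=0$. If $\gamma\notin\mathbb{Q}$ the only nonnegative-integer solution is $(l,m)=(0,0)$, so $Z(U)=F$. If $\gamma=l_0/m_0$ in lowest terms with $m_0>0$, the nonnegative-integer solutions are exactly $(l,m)=n(l_0,m_0)$ with $n\ge 0$, so $Z(U)$ is the $F$-span of the monomials $(d^{l_0}e^{m_0})^n$. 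Finally, Lemma \ref{cde-operator} ensures that $L_d$ and $L_e$ commute and that $L_{d^l e^m}=L_d^l L_e^m$, so these powers multiply in $U(\mathbb{M})$ exactly as they do in $P(\mathbb{M})$, giving the polynomial-ring identification $Z(U)=F[d^{l_0}e^{m_0}]=F[d^{\gamma m}e^m]$.

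No deep obstacle arises once the explicit operator formulas of Section \ref{operatorsection} are in hand; the main points to watch are the collapsed signs in the simplified $\rho_b$ and $\rho_c$, the fact that a single application of $\rho_d$ (or $\rho_e$) suffices to eliminate $a$-dependence, and the observation that the generator $d^{l_0}e^{m_0}$ lies in $Z(U)$ by construction.
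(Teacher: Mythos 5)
Your proof is correct and follows essentially the same route as the paper: restrict to the common kernel of the operators $\rho_a,\rho_b,\rho_c,\rho_d,\rho_e$ of Section \ref{operatorsection}, first eliminate dependence on $a$, then use the collapsed formulas $\rho_b=-2\,D_cM_d$, $\rho_c=2\,D_bM_d$ and the Euler operator $\rho_a=-M_dD_d+\gamma M_eD_e$ to force $f\in F[d,e]$ with $l=\gamma m$ monomial by monomial. The only divergence is in the first reduction, where the paper uses the automorphism $a\mapsto a+1$ of the characteristic subalgebra $Z(U)$ together with the absence of zero divisors in $U(\mathbb{M})$, whereas you read the same conclusion directly off $\rho_d=(I-S^{-1})M_d$ by comparing coefficients of powers of $a$ -- an equally valid and slightly more economical argument.
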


\begin{proof}
We first show that $Z(U) \subseteq U(L)$ where $L = \text{span}\{ b, c, d, e
\}$. Choose
  \[
  n = \sum_{i=0}^m a^i s_i \in Z(U),
  \quad
  s_i \in U(L),
  \]
where $m$ is minimal satisfying $n \notin U(L)$ and $s_m \ne 0$. If $m = 0$
then we are done. If $m \ge 1$ then we show by contradiction that $m = 1$.
Assume $m > 1$ and consider the automorphism $\varphi$ of $U(\mathbb{M})$
defined by
  \[
  \varphi(a) = a+1, \quad
  \varphi(b) = b, \quad
  \varphi(c) = c, \quad
  \varphi(d) = d, \quad
  \varphi(e) = e.
  \]
Since $Z(U)$ is a characteristic subalgebra, $\varphi(n) \in Z(U)$ and so
$\varphi(n) - n \in Z(U)$. However,
  \[
  \varphi(n)-n
  =
  \sum_{i=0}^m (a+1)^i s_i - \sum_{i=0}^m a^i s_i
  =
  \sum_{i=0}^m ( (a+1)^i - a^i ) s_i.
  \]
Hence $\varphi(n) - n$ is a nonzero element of $Z(U) \setminus U(L)$, but its
degree in $a$ is strictly less than $m$, contradicting the choice of $n$. Hence
$m = 1$ and $n = s_0 + a s_1$. Since $n \in Z(U)$ we have $\mathrm{ad}_d(n) =
[n,d] = 0$. Lemma \ref{d-formulas} shows that $\mathrm{ad}_d = - D_{a,d}$ and
so $\mathrm{ad}_d$ is a derivation of $U(\mathbb{M})$; also $\mathrm{ad}_d = (
I - S^{-1} ) M_d$ which is zero on $U(L)$. Hence
  \[
  0
  =
  \mathrm{ad}_d (n)
  =
  \mathrm{ad}_d (s_0 + a s_1)
  =
  \mathrm{ad}_d(s_0) + \mathrm{ad}_d(a) s_1 + a \mathrm{ad}_d(s_1)
  =
  d s_1.
  \]
Hence $s_1 = 0$ since $U(\mathbb{M})$ has no zero divisors by
\cite{PerezIzquierdoShestakov}. Therefore $Z(U) \subseteq U(L)$.

Clearly $n \in Z(U)$ if and only if $\mathrm{ad}_s(n) = 0$ for $s \in \{ a, b,
c, d, e \}$. Consider
  \[
  n
  =
  \sum_i \alpha_i b^{j_i} c^{k_i} d^{l_i} e^{m_i}
  \in
  Z(U),
  \quad
  \alpha_i \ne 0.
  \]
By the formula for $\rho_b$ from Lemma \ref{b-formulas} we have
  \allowdisplaybreaks
  \begin{align*}
  \mathrm{ad}_b (n)
  &=
  (I - S) M_b (n)
  +
  (S - 2 S^{-1} - I) D_c M_d (n)
  =
  0 - 2 D_c M_d (n)
  \\
  &=
  \sum_i ( -2 \alpha_i k_i ) b^{j_i} c^{k_i-1} d^{l_i+1} e^{m_i};
  \end{align*}
hence $k_i = 0$ for all $i$. By the formula for $\rho_c$ from Lemma
\ref{c-formulas} we have
  \allowdisplaybreaks
  \begin{align*}
  \mathrm{ad}_b (n)
  &=
  (I - S) M_c (n)
  +
  (S + 2 S^{-1} - I) D_b M_d (n)
  =
  0 + 2 D_b M_d (n)
  \\
  &=
  \sum_i ( 2 \alpha_i j_i ) b^{j_i-1} d^{l_i+1} e^{m_i};
  \end{align*}
hence $j_i = 0$ for all $i$. By the formula for $\rho_a$ from Lemma
\ref{a-formulas} we have
  \allowdisplaybreaks
  \begin{align*}
  \mathrm{ad}_a(n)
  =
  \sum_i \alpha_i ( - l_i {+} \gamma m_i ) d^{l_i} e^{m_i};
  \end{align*}
hence $l_i = \gamma m_i $ for all $i$. It is clear that $\mathrm{ad}_d (
d^{l_i} e^{m_i} ) = \mathrm{ad}_e (d^{l_i} e^{m_i}) = 0$.

Hence if $\gamma \in \mathbb{Q}$ then $Z(U)$ is generated by $d^{\gamma m} e^m$
where $m$ is the smallest positive integer for which $\gamma m \in \mathbb{Z}$,
and if $\gamma \notin \mathbb{Q}$ then $Z(U) = F$.
\end{proof}


\section{The universal nonassociative enveloping algebra}

In this section we compute structure constants for $U(\mathbb{M})$ where
$\mathbb{M} = \mathbb{M}_\gamma$ belongs to the one-parameter family
\eqref{oneparameterfamily} of solvable 5-dimensional Malcev algebras.

\begin{lemma} \label{bcde-operatorforM}
In $U(\mathbb{M})$ we have
  \[
  L_{b^j c^k d^l e^m} = \sum_{\alpha=0}^{\min(j,k)}
  \sum^{\alpha}_{\beta=0} (-1)^{\alpha-\beta} \alpha!
  \binom{\alpha}{\beta} \binom{j} {\alpha} \binom{k}{\alpha}
  S^{-\beta} L^{j-\alpha}_b L^{k-\alpha}_c M^{\alpha}_d L^l_d L^m_e.
  \]
\end{lemma}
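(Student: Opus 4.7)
The plan is to specialise Proposition~\ref{bcde-operator} to $\mathbb{M}=\mathbb{M}_\gamma$ by identifying the abstract operator $D = -\tfrac13(2\,\mathrm{ad}_d + D_{b,c})$ with an explicit operator on $P(\mathbb{M})$; once $D$ is known, expanding $D^\alpha$ by the binomial theorem and rearranging via commutativity will yield the claimed double sum. The starting point is
\[
L_{b^j c^k d^l e^m} = \sum_{\alpha=0}^{\min(j,k)} \alpha! \binom{j}{\alpha} \binom{k}{\alpha} L_b^{j-\alpha} L_c^{k-\alpha} L_d^l L_e^m D^\alpha,
\]
from Proposition~\ref{bcde-operator}, so the task is to turn $D^\alpha$ into the inner $\beta$-sum and move it to the correct position.

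The key claim is that $D = (S^{-1}-I)M_d = M_d(S^{-1}-I)$ as operators on $P(\mathbb{M})$. Lemma~\ref{d-formulas} gives $\mathrm{ad}_d = (I-S^{-1})M_d$, so it suffices to show $D_{b,c} = \mathrm{ad}_d$. Since $D_{b,c}$ is a derivation of $U(\mathbb{M})$ with $D_{b,c}(a)=d$ and $D_{b,c}(b)=D_{b,c}(c)=D_{b,c}(d)=D_{b,c}(e)=0$ (Table~\ref{derivationtable}), a short induction on $i$ using only the derivation law $D_{b,c}(a\cdot y) = d\cdot y + a\cdot D_{b,c}(y)$ together with $L_d = S^{-1}M_d$ should yield
\[
D_{b,c}(a^i b^j c^k d^l e^m) = (a^i - (a-1)^i)\, b^j c^k d^{l+1} e^m = \mathrm{ad}_d(a^i b^j c^k d^l e^m).
\]
Hence $D = -\tfrac13(3\,\mathrm{ad}_d) = -\mathrm{ad}_d = (S^{-1}-I)M_d$.

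Because $M_d$ does not involve the variable $a$, it commutes with every shift $S^\nu$, so by the binomial theorem
\[
D^\alpha = M_d^\alpha (S^{-1}-I)^\alpha = \sum_{\beta=0}^\alpha (-1)^{\alpha-\beta}\binom{\alpha}{\beta}\, S^{-\beta} M_d^\alpha.
\]
Inspection of Lemmas~\ref{a-formulas}--\ref{e-formulas} shows that $L_b, L_c, L_d, L_e$ are all built from $M_b, M_c, M_d, M_e, D_b, D_c$ and $S^{\pm 1}$ -- crucially, none contains $M_a$ -- so each commutes with $S^{-\beta}$ and with $M_d^\alpha$. Substituting the expansion of $D^\alpha$ into the Proposition~\ref{bcde-operator} formula and sliding $S^{-\beta} M_d^\alpha$ leftwards past $L_d^l L_e^m$ and across $L_b^{j-\alpha} L_c^{k-\alpha}$ produces exactly the stated double sum.

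The main obstacle is the identification $D_{b,c}=\mathrm{ad}_d$: the nonassociativity of $U(\mathbb{M})$ forbids treating a left-normed monomial as a product of associative factors, so one must unwind the derivation property one letter at a time, noting in the inductive step that $a\cdot((a^{i-1}-(a-1)^{i-1})b^j c^k d^{l+1} e^m) = (a^i - a(a-1)^{i-1})b^j c^k d^{l+1} e^m$, which combines with $L_d(a^{i-1}b^j c^k d^l e^m)=(a-1)^{i-1} b^j c^k d^{l+1}e^m$ to give the collapse $a^i - a(a-1)^{i-1}+(a-1)^{i-1}=a^i-(a-1)^i$. Once $D$ is in hand, the binomial expansion and the rearrangement via the no-$M_a$ commutativity are entirely formal.
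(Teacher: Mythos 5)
Your proposal is correct and follows essentially the same route as the paper: specialise Proposition~\ref{bcde-operator}, show $D_{b,c}=\mathrm{ad}_d$ so that $D=-\mathrm{ad}_d=(S^{-1}-I)M_d$, and expand $D^\alpha$ binomially, commuting $S^{-\beta}$ and $M_d^\alpha$ into place. The only (immaterial) difference is that you verify $D_{b,c}=\mathrm{ad}_d$ by an explicit induction on the exponent of $a$, whereas the paper observes that $D_{b,c}-\mathrm{ad}_d$ is a derivation vanishing on $a$ and on $U(L)$, hence on all of $U(\mathbb{M})$.
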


\begin{proof}
By Lemma \ref{d-formulas} we have $\mathrm{ad}_d = -D_{a,d} = (I-S^{-1})M_d$,
so $\mathrm{ad}_d$ is a derivation of $U(\mathbb{M})$. Since $\mathrm{ad}_d(a)
= d$ and $D_{b,c}(a) = d$, we have $(D_{b,c} - \mathrm{ad}_d)(a) = 0$. Since
$D_{b,c} - \mathrm{ad}_d$ is a derivation of $U(\mathbb{M})$, we have $(D_{b,c}
- \mathrm{ad}_d)(a^k) = 0$. It follows from Table \ref{derivationtable} and
Lemma \ref{d-formulas} that $\mathrm{ad}_d(x)= 0$ and $D_{b,c}(x) = 0$ for any
$x \in U(L)$ where $L = \mathrm{span}\{b, c, d, e\}$. Hence
  \[
  (D_{b,c} - \mathrm{ad}_d) (a^k x)
  =
  (D_{b,c} - \mathrm{ad}_d)(a^k) x
  +
  a^k (D_{b,c} - \mathrm{ad}_d)(x)
  =
  0.
  \]
Therefore $\mathrm{ad}_d = D_{b,c}$ on $U(\mathbb{M})$. This implies that the
operator $D$ from the proof of Proposition \ref{bcde-operator} satisfies $D =
-\mathrm{ad}_d = (S^{-1}-I) M_d$.  Therefore
  \[
  D^{\alpha}
  =
  (S^{-1} - I)^{\alpha} M^{\alpha}_d
  =
  \sum^{\alpha}_{\beta=0}
  (-1)^{\alpha-\beta}
  \binom{\alpha}{\beta}
  S^{-\beta}
  M^{\alpha}_d.
  \]
Using this in Proposition \ref{bcde-operator} gives the stated formula for
$L_{b^j c^k d^l e^m}$.
\end{proof}

\begin{remark}
If we set $m = 0$ in Lemma \ref{bcde-operatorforM} then we obtain the formula
for $L_{b^j c^k d^l}$ in Lemma 4.2 of \cite{BHPU}. The following Lemma
\ref{Ra-bracket} generalizes Lemma 4.3 of \cite{BHPU}.
\end{remark}

\begin{lemma} \label{Ra-bracket}
In $U(\mathbb{M})$ we have
  \allowdisplaybreaks
  \begin{align*}
  &
  [R_a, L_a^s S^t L_b^u D_b^v D_c^w L_c^x M_d^y L_d^z L_e^m]
  =
  -(t+v+w+y) L_a^s S^t L_b^u D_b^v D_c^w L_c^x M_d^y L_d^z L_e^m
  \\
  &
  - u L_a^s S^{t-1} L_b^{u-1} D_b^v D_c^{w+1} L_c^x M_d^{y+1}
  L_d^z L_e^m
  + x L_a^s S^{t-1} L_b^u D_b^{v+1} D_c^w L_c^{x-1}
  M_d^{y+1} L_d^z L_e^m.
  \end{align*}
\end{lemma}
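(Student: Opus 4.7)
The plan is to exploit that $[R_a,-]$ is a derivation on the associative algebra of linear operators on $P(\mathbb{M})$, and to compute $[R_a,X]$ for each of the nine basic operators appearing in the monomial.

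First I would compute $[R_a,X]$ for each generator using the formulas from Lemmas \ref{a-formulas}--\ref{e-formulas}. Writing $R_a = M_a + \rho_a$ with $\rho_a$ involving only operators in $b,c,d,e$, direct calculation gives $[R_a,L_a]=0$, $[R_a,S]=-S$, $[R_a,D_b]=-D_b$, $[R_a,D_c]=-D_c$, $[R_a,M_d]=-M_d$, $[R_a,L_d]=0$ and $[R_a,L_e]=0$; the vanishings for $L_d=S^{-1}M_d$ and $L_e=S^{-\gamma}M_e$ follow from the cancellation $[M_a,S^{-\alpha}]M + S^{-\alpha}[\rho_a,M]=0$, while the scalar commutators come from $[M_a,S]=-S$ and single contributions $[M_bD_b,D_b]=-D_b$, $[M_cD_c,D_c]=-D_c$, $[-M_dD_d,M_d]=-M_d$. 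The two more delicate commutators are $[R_a,L_b] = -S^{-1}D_c M_d$ and $[R_a,L_c] = S^{-1}D_b M_d$, obtained by expanding $L_b$ and $L_c$ via Lemmas \ref{b-formulas}--\ref{c-formulas} and collecting the six terms of $\rho_a$ against the two terms of $L_b$ (resp.\ $L_c$), together with the $[M_a,S^{\pm 1}]$ contributions from $L_a=M_a$.

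Next I would verify that each correction operator commutes with the base operator it came from: $[L_b,\,S^{-1}D_c M_d] = 0$ and $[L_c,\,S^{-1}D_b M_d] = 0$. These are short direct checks using $[S^{-1},S]=0$ and the fact that the factors $M_b, D_c, M_d$ commute through every summand of $L_b$ by variable disjointness (symmetrically for $L_c$). This secures the clean identities $[R_a,L_b^u] = -u\,L_b^{u-1} S^{-1} D_c M_d$ and $[R_a,L_c^x] = x\,L_c^{x-1} S^{-1} D_b M_d$ via Leibniz plus induction on $u$ and $x$.

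Finally I would apply the derivation property to the full monomial and collect terms. The factors $S^t, D_b^v, D_c^w, M_d^y$ each contribute $-n$ times the original monomial, summing to the coefficient $-(t+v+w+y)$ of the first term; the factors $L_a^s, L_d^z, L_e^m$ contribute nothing; and $L_b^u, L_c^x$ contribute the two modified monomials. To put those modified terms into the form stated, I would move $S^{-1}$ leftward past $L_b^{u-1}, D_b, D_c, L_c^{x-1}$ (all variable-disjoint from $a$) to combine with $S^t$ into $S^{t-1}$, and move $D_c M_d$ (resp.\ $D_b M_d$) into place, using that $M_d$ commutes with $L_c, D_b, D_c, L_d, L_e$ and that $D_b, D_c$ commute with each other and with all operators in other variables.

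The main obstacle is the computation of $[R_a,L_b]$ and $[R_a,L_c]$: the six terms of $\rho_a$ expanded against the two terms of $L_b$, combined with $[M_a,L_b]$, produce many summands that must collapse to the single term $-S^{-1}D_c M_d$. All subsequent commutations used in the final reorganization are routine variable-disjointness checks and cause no trouble.
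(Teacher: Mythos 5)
Your proposal is correct and has the same skeleton as the paper's proof: compute $[R_a,X]$ for each basic factor of the monomial, then apply the Leibniz rule for $[R_a,-]$ and collect. The one genuine difference is how the commutators $[R_a,L_b]$, $[R_a,L_c]$, $[R_a,L_d]$, $[R_a,L_e]$ are obtained. The paper gets them from the identity $-3[R_a,L_y]=\mathrm{ad}_{[a,y]}+D_{a,y}$ of Lemma \ref{morandilemma}, so each is a two-term combination of operators already computed in Lemmas \ref{b-formulas}--\ref{e-formulas}; you instead expand the explicit six-term expression for $R_a$ from Lemma \ref{a-formulas} against the two terms of $L_y$ and cancel by hand. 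Your route does work --- the six-against-two expansion for $[R_a,L_b]$ really does collapse to $-S^{-1}D_cM_d$ --- but it is more laborious, and it makes the argument depend on the explicit formula for $\rho_a$ in a way the paper's does not. That dependence bites at exactly one point: your cancellation $[M_a,S^{-\gamma}]M_e+S^{-\gamma}[\rho_a,M_e]=0$ for $[R_a,L_e]$ needs $[\rho_a,M_e]=-\gamma M_e$, i.e.\ the coefficient of $M_eD_e$ in $\rho_a$ must be $-\gamma$ (which is what $\rho_a(e)=[e,a]=-\gamma e$ forces), whereas the formula as printed in Lemma \ref{a-formulas} has $+\gamma M_eD_e$ and would give $[R_a,L_e]=2\gamma S^{-\gamma}M_e\neq 0$. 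So you should either correct that sign or, for this one commutator, fall back on the Morandi identity as the paper does (which uses only Lemma \ref{e-formulas} and is immune to the issue). On the plus side, your explicit check that $S^{-1}D_cM_d$ commutes with $L_b$ (and $S^{-1}D_bM_d$ with $L_c$), yielding the clean power rules $[R_a,L_b^u]=-u\,L_b^{u-1}S^{-1}D_cM_d$ and $[R_a,L_c^x]=x\,L_c^{x-1}S^{-1}D_bM_d$, makes precise a step the paper leaves implicit in ``expanding and collecting terms,'' and the final reordering commutations you list ($S^{-1}$ past $L_b,L_c,D_b,D_c$; $M_d$ past $L_c,D_b,D_c$; $D_b$ past $D_c,L_c$) are exactly the ones needed to reach the stated normal form.
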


\begin{proof}
For any $x,y\in M$, by Lemma \ref{morandilemma} we have
  \[
  D_{x,y}
  =
  - \mathrm{ad}_{[x,y]} - 3 [ R_x, L_y],
  \quad \text{hence} \quad
  -3 [R_x, L_y]
  =
  \mathrm{ad}_{[x,y]} + D_{x,y}.
  \]
Using this and Lemma \ref{b-formulas} we show that $[ R_a, L_b] = - S^{-1} D_c
M_d$:
  \allowdisplaybreaks
  \begin{align*}
  &
  -3 [R_a, L_b]
  =
  \mathrm{ad}_{[a,b]} + D_{a,b}
  =
  - \mathrm{ad}_b + D_{a,b}
  \\
  &=
  - (I-S) M_b - (S - 2S^{-1} - I) D_c M_d
  + (I-S) M_b + (S+ S^{-1} - I) D_c M_d
  \\
  &=
  3 S^{-1} D_c M_d.
  \end{align*}
Similarly, using Lemma \ref{c-formulas} we show that $[ R_a, L_c] = S^{-1} D_b
M_d$:
  \allowdisplaybreaks
  \begin{align*}
  &
  -3 [ R_a, L_c]
  =
  \mathrm{ad}_{[a,c]} + D_{a,c}
  =
  -\mathrm{ad}_c + D_{a,c}
  \\
  &=
  - (I-S)M_c - (S + 2 S^{-1} - I) D_b M_d
  + (I-S) M_c + (S- S^{-1} - I) D_b M_d
  \\
  &=
  -3 S^{-1} D_b M_d.
  \end{align*}
Using Lemmas \ref{d-formulas} and \ref{e-formulas} we see that $[R_a, L_d] = 0$
and $[ R_a, L_e ] = 0$:
  \allowdisplaybreaks
  \begin{align*}
  -3 [ R_a, L_d]
  &=
  \mathrm{ad}_{[a,d]} + D_{a,d}
  =
  \mathrm{ad}_d + D_{a,d} = 0,
  \\
  -3 [ R_a, L_e]
  &=
  \mathrm{ad}_{[a,e]} + D_{a,e}
  =
  \gamma\mathrm{ad}_e + D_{a,e} = 0.
  \end{align*}
Similarly, using obvious commutation relations (Lemma 3.2 of \cite{BHPU}), we
have
  \allowdisplaybreaks
  \begin{align*}
  [ R_a, D_b ]
  &=
  [ M_b D_b, D_b ] - 3 [ D_b D_c M_d, D_b ]
  =
  [ M_b, D_b ] D_b - 0
  =
  - D_b,
  \\
  [ R_a, D_c ]
  &=
  [ M_c D_c, D_c ] - 3 [ D_b D_c M_d, D_c ]
  =
  [ M_c, D_c ] D_c - 0
  =
  - D_c,
  \\
  [ R_a, M_d ]
  &=
  - [ M_d D_d, M_d ] - 3 [ D_b D_c, M_d ] M_d
  =
  - M_d [ D_d, M_d ]
  =
  - M_d,
  \\
  [ R_a, S ]
  &=
  [ M_a, S ]
  =
  - S.
  \end{align*}
We now apply these formulas to the derivation rule:
  \allowdisplaybreaks
  \begin{align*}
  &
  [ R_a, L_a^s S^t L_b^u D_b^v D_c^w L_c^x M_d^y L_d^z L_e^m]
  =
  L_a^s [R_a, S^t] L_b^u D_b^v D_c^w L_c^x M_d^y L_d^z L_e^m
  \\
  &
  +
  L_a^s S^t [R_a, L_b^u] D_b^v D_c^w L_c^x M_d^y L_d^z L_e^m
  +
  L_a^s S^t  L_b^u [R_a,  D_b^v] D_c^w L_c^x M_d^y L_d^z L_e^m
  \\
  &
  +
  L_a^s S^t L_b^u D_b^v [R_a, D_c^w] L_c^x M_d^y L_d^z L_e^m
  +
  L_a^s  S^t L_b^u D_b^v D_c^w [R_a, L_c^x] M_d^y L_d^z L_e^m
  \\
  &
  +
  L_a^s  S^t L_b^u D_b^v D_c^w L_c^x [R_a, M_d^y] L_d^z L_e^m.
  \end{align*}
Expanding the right side and collecting terms gives the stated result.
\end{proof}

\begin{lemma}\label{abcde-operator}
In $U(\mathbb{M})$ the operator $L_{a^i b^j c^k d^l e^m}$ equals
  \allowdisplaybreaks
  \begin{align*}
  &
  \sum_{\alpha=0}^{\min(j,k)}
  \sum_{\beta=0}^{\alpha}
  \sum_{\kappa=0}^i
  \sum_{\delta=0}^{i-\kappa}
  \sum_{\epsilon=0}^{i-\kappa-\delta}
  (-1)^{i+\alpha-\beta-\kappa-\delta}
  \alpha! \delta! \epsilon!
  \binom{\alpha}{\beta}
  \binom{j}{\alpha,\epsilon}
  \binom{k}{\alpha,\delta}
  \times
  \\
  &\quad
  X_i(\kappa, \delta, \epsilon)
  L_a^{\kappa}
  S^{-\beta-\delta-\epsilon}
  L_b^{j-\alpha-\epsilon}
  D_b^{\delta}
  D_c^{\epsilon}
  L_c^{k-\alpha-\delta}
  M_d^{\alpha+\delta+\epsilon}
  L_d^l
  L_e^m,
  \end{align*}
where $X_i(\kappa, \delta, \epsilon)$ is  polynomial in $\alpha-\beta$
satisfying $X_0(0,0,0) = 1$,
  \[
  X_{i+1}(\kappa,\delta,\epsilon)
  =
  (\alpha{-}\beta{+}\delta{+}\epsilon) X_i(\kappa,\delta,\epsilon)
  +
  X_i(\kappa{-}1,\delta,\epsilon)
  +
  X_i(\kappa,\delta{-}1,\epsilon)
  +
  X_i(\kappa,\delta,\epsilon{-}1),
  \]
and $X_i(\kappa,\delta,\epsilon) = 0$ unless $0 \le \kappa \le i$, $0 \le
\delta \le i{-}\kappa$, $0 \le \epsilon \le i{-}\kappa{-}\delta$.
\end{lemma}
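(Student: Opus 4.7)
The plan is to prove the formula by induction on $i$. The base case $i=0$ reduces directly to Lemma \ref{bcde-operatorforM}: the constraints $0\le\kappa\le i$, $0\le\delta\le i-\kappa$, $0\le\epsilon\le i-\kappa-\delta$ force $\kappa=\delta=\epsilon=0$ with $X_0(0,0,0)=1$, and since $\binom{j}{\alpha,0}=\binom{j}{\alpha}$ and $\binom{k}{\alpha,0}=\binom{k}{\alpha}$, the whole sum collapses to the formula of Lemma \ref{bcde-operatorforM}. For the inductive step, since $a\in N_{\mathrm{alt}}(U(\mathbb{M}))$, Lemma \ref{morandilemma} gives
$$L_{a^{i+1}b^jc^kd^le^m} \;=\; L_a\,L_{a^ib^jc^kd^le^m} \;+\; [R_a,\,L_{a^ib^jc^kd^le^m}].$$
Because $L_a^\kappa$ stands at the leftmost position in every summand of the inductive hypothesis, the product $L_a\cdot L_{a^i\ldots}$ simply promotes $\kappa\mapsto\kappa+1$ term by term, with no commutation past $S^{-\beta-\delta-\epsilon}$ required. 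Rewriting in the canonical form for $i{+}1$ and noting $(-1)^{(i+1)+\alpha-\beta-(\kappa+1)-\delta}=(-1)^{i+\alpha-\beta-\kappa-\delta}$, this contribution matches the summand $X_i(\kappa'-1,\delta,\epsilon)$ in the recursion for $X_{i+1}(\kappa',\delta,\epsilon)$.

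Next I apply Lemma \ref{Ra-bracket} to each summand of the inductive hypothesis under the identifications $s=\kappa$, $t=-\beta-\delta-\epsilon$, $u=j-\alpha-\epsilon$, $v=\delta$, $w=\epsilon$, $x=k-\alpha-\delta$, $y=\alpha+\delta+\epsilon$, $z=l$. The diagonal coefficient $-(t+v+w+y)=-(\alpha-\beta+\delta+\epsilon)$, combined with the sign flip $(-1)^i\to(-1)^{i+1}$, produces the $(\alpha-\beta+\delta+\epsilon)X_i(\kappa,\delta,\epsilon)$ summand of the recursion. The second bracket term (factor $-u$, shifting $t,u,w,y$ by $-1,-1,+1,+1$) corresponds to reindexing $\epsilon\mapsto\epsilon+1$ in the output formula, using the identity $(j-\alpha-\epsilon)\binom{j}{\alpha,\epsilon}=(\epsilon+1)\binom{j}{\alpha,\epsilon+1}$ together with $\epsilon!\to(\epsilon+1)!$ to convert the coefficient $-(j-\alpha-\epsilon)$ (after the sign flip) into the canonical prefactor, producing the $X_i(\kappa,\delta,\epsilon'-1)$ recursion term at $\epsilon'=\epsilon+1$. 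The third bracket term (factor $+x$, shifting $t,v,x,y$) is analogous, corresponding to $\delta\mapsto\delta+1$ via $(k-\alpha-\delta)\binom{k}{\alpha,\delta}=(\delta+1)\binom{k}{\alpha,\delta+1}$ and $\delta!\to(\delta+1)!$, yielding the $X_i(\kappa,\delta'-1,\epsilon)$ summand. Adding the four contributions and invoking the recursion for $X_{i+1}$ completes the inductive step.

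The main obstacle is purely organizational rather than conceptual: five summation indices $\alpha,\beta,\kappa,\delta,\epsilon$, two multinomial coefficients, and the sign $(-1)^{i+\alpha-\beta-\kappa-\delta}$ must be propagated through three independent shift operations generated by Lemma \ref{Ra-bracket}, and the three shifted sums reindexed back to canonical form so that the four pieces assemble into precisely the defining recursion for $X_{i+1}$. No new algebraic identities beyond the two elementary multinomial rewritings above are required; everything else follows from the commutator formulas already recorded in Lemmas \ref{morandilemma}, \ref{b-formulas}, \ref{c-formulas}, \ref{d-formulas}, \ref{e-formulas}, and \ref{Ra-bracket}.
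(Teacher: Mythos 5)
Your proposal is correct and follows exactly the paper's route: induction on $i$ with Lemma \ref{bcde-operatorforM} as the base case and Lemma \ref{Ra-bracket} (via $L_{a^{i+1}x}=L_aL_{a^ix}+[R_a,L_{a^ix}]$ from Lemma \ref{morandilemma}) driving the inductive step, the four resulting contributions assembling into the defining recursion for $X_{i+1}$. The paper simply outsources the bookkeeping to Lemma 4.4 of \cite{BHPU}, whereas you carry out the sign and multinomial reindexings explicitly; both are the same argument.
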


\begin{proof}
Induction on $i$. The basis is Lemma \ref{bcde-operatorforM} and the inductive
step is Lemma \ref{Ra-bracket}. The rest of the proof is a step-by-step
repetition of that of Lemma 4.4 of \cite{BHPU}.
\end{proof}

\begin{lemma}\label{X-function}
We have
  \[
  X_i(\kappa,\delta,\epsilon)
  =
  \binom{\delta{+}\epsilon}{\epsilon}
  \sum_{\zeta=0}^{i{-}\kappa{-}\delta{-}\epsilon}
  \binom{i}{\kappa,\zeta}
  \left\{ \begin{matrix}
  i{-}\kappa{-}\zeta \\ \delta{+}\epsilon
  \end{matrix} \right\}
  (\alpha{-}\beta)^\zeta,
  \]
where the Stirling numbers of the second kind are defined by
  \[
  \left\{ \begin{matrix} r \\ s \end{matrix} \right\}
  =
  \frac{1}{s!} \sum_{t=0}^s (-1)^{s-t} \binom{s}{t} t^r.
  \]
\end{lemma}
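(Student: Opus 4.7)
The plan is to prove the closed form by induction on $i$, using the recurrence given in Lemma \ref{abcde-operator} as the inductive step and the standard recurrences for multinomial coefficients and Stirling numbers of the second kind to reconcile the four terms.

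For the base case $i = 0$ the boundary conditions force $(\kappa,\delta,\epsilon) = (0,0,0)$, and both the definition ($X_0(0,0,0) = 1$) and the formula (a single term equal to $\binom{0}{0}\binom{0}{0,0}\stirling{0}{0}(\alpha{-}\beta)^0 = 1$) agree. For the inductive step, set $n = \delta+\epsilon$ and denote the conjectured formula at level $i$ by $Y_i(\kappa,\delta,\epsilon)$. I would compute each of the four summands on the right side of the recurrence
\[
X_{i+1}(\kappa,\delta,\epsilon)
= (\alpha{-}\beta{+}n) X_i(\kappa,\delta,\epsilon)
+ X_i(\kappa{-}1,\delta,\epsilon)
+ X_i(\kappa,\delta{-}1,\epsilon)
+ X_i(\kappa,\delta,\epsilon{-}1),
\]
substituting $Y_i$ for $X_i$. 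The $(\alpha{-}\beta)X_i$ contribution is handled by shifting $\zeta \mapsto \zeta{-}1$ inside the sum, producing $\binom{n}{\epsilon}\binom{i}{\kappa,\zeta{-}1}\stirling{i{-}\kappa{-}\zeta{+}1}{n}$ as the coefficient of $(\alpha{-}\beta)^\zeta$.

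The two terms $X_i(\kappa,\delta{-}1,\epsilon)$ and $X_i(\kappa,\delta,\epsilon{-}1)$ share a common factor after using Pascal's identity
\[
\binom{n{-}1}{\epsilon} + \binom{n{-}1}{\epsilon{-}1} = \binom{n}{\epsilon},
\]
which collapses their sum into $\binom{n}{\epsilon}\sum_\zeta \binom{i}{\kappa,\zeta}\stirling{i{-}\kappa{-}\zeta}{n{-}1}(\alpha{-}\beta)^\zeta$. Combining this with the $n X_i$ piece via the Stirling recurrence
\[
\stirling{r{+}1}{n} = n\stirling{r}{n} + \stirling{r}{n{-}1} \qquad (r = i{-}\kappa{-}\zeta)
\]
merges them into a single term with coefficient $\binom{i}{\kappa,\zeta}\stirling{i{-}\kappa{-}\zeta{+}1}{n}$. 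What remains inside the sum is then
\[
\Bigl[\binom{i}{\kappa,\zeta{-}1} + \binom{i}{\kappa,\zeta} + \binom{i}{\kappa{-}1,\zeta}\Bigr]\stirling{i{-}\kappa{-}\zeta{+}1}{n},
\]
and the trinomial Pascal identity $\binom{i+1}{\kappa,\zeta} = \binom{i}{\kappa{-}1,\zeta} + \binom{i}{\kappa,\zeta{-}1} + \binom{i}{\kappa,\zeta}$ (viewing the multinomial over the three parts $\kappa$, $\zeta$, $i{-}\kappa{-}\zeta$) finishes the identification with $Y_{i+1}(\kappa,\delta,\epsilon)$.

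The proof is therefore a routine bookkeeping argument; the only real subtlety is spotting the correct pairing of the four recurrence terms so that the Stirling recurrence and the trinomial Pascal identity apply cleanly. Verifying that the boundary conditions $X_i(\kappa,\delta,\epsilon) = 0$ outside the prescribed range are preserved by $Y_i$ is immediate from the vanishing of the multinomial coefficient $\binom{i}{\kappa,\zeta}$ for $\zeta > i{-}\kappa$ and the vanishing of $\stirling{r}{s}$ for $r < s$.
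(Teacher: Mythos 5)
Your induction is correct: the pairing of the four recurrence terms (Pascal's identity on $\binom{\delta+\epsilon-1}{\epsilon}+\binom{\delta+\epsilon-1}{\epsilon-1}$, the Stirling recurrence to absorb the $(\delta+\epsilon)X_i$ piece, and the trinomial Pascal identity for $\binom{i+1}{\kappa,\zeta}$) checks out, and your observation that the out-of-range terms vanish because $\binom{i}{\kappa,\zeta}=0$ for $\zeta>i-\kappa$ and the Stirling number vanishes when its top entry is smaller than $\delta+\epsilon$ is exactly what justifies the index shifts. The paper itself gives no argument here — it only cites Definition 4.5 and Lemma 4.6 of the earlier work [BHPU], where the same recurrence is verified — so your proof is the expected one, just written out in full.
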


\begin{proof}
See \cite{BHPU}, Definition 4.5 and Lemma 4.6.
\end{proof}

The formulas for $L_b$ and $L_c$ in Lemmas \ref{b-formulas} and
\ref{c-formulas} are the same as for the 4-dimensional solvable Malcev algebra
\cite{BHPU}. Thus Lemma 5.1 of \cite{BHPU} holds in our case:
  \allowdisplaybreaks
  \begin{align}
  L_b^u
  &=
  \sum_{\eta=0}^u
  \sum_{\theta=0}^{u-\eta}
  (-1)^{u-\eta-\theta}
  \binom{u}{\eta,\theta}
  S^{u-2\theta}
  M_b^{\eta}
  M_d^{u-\eta}
  D_c^{u-\eta},
  \label{Lb-formula}
  \\
  L_c^x
  &=
  \sum_{\lambda=0}^x
  \sum_{\mu=0}^{x-\lambda}
  (-1)^{x-\lambda}
  \binom{x}{\lambda,\mu}
  S^{x-2\mu}
  M_c^{\lambda}
  M_d^{x-\lambda}
  D_b^{x-\lambda}.
  \label{Lc-formula}
  \end{align}
Therefore Lemma \ref{abcde-operator} can be rewritten in terms of $M_x$, $D_x$
and $S$ as follows.

\begin{lemma}
In $U(\mathbb{M})$ the operator $L_{a^i b^j c^k d^l e^m}$ equals
  \allowdisplaybreaks
  \begin{align*}
  &
  \sum_{\alpha=0}^{\min(j,k)}
  \sum_{\beta=0}^{\alpha}
  \sum_{\kappa=0}^{i}
  \sum_{\delta=0}^{i-\kappa}
  \sum_{\epsilon=0}^{i-\kappa-\delta}
  \sum_{\zeta=0}^{i{-}\kappa{-}\delta{-}\epsilon}
  \sum_{\eta=0}^{j-\alpha-\epsilon}
  \sum_{\theta=0}^{j-\alpha-\epsilon-\eta}
  \sum_{\lambda=0}^{k-\alpha-\delta}
  \sum_{\mu=0}^{k-\alpha-\delta-\lambda}
  \\
  &
  (-1)^{i+j+k+\alpha-\beta-\kappa-\epsilon-\eta-\theta-\lambda}
  (\alpha{-}\beta)^\zeta
  \alpha!
  \binom{\alpha}{\beta}
  (\delta{+}\epsilon)!
  \binom{i}{\kappa,\zeta}
  \times
  \\
  &
  \left\{ \begin{matrix}
  i{-}\kappa{-}\zeta \\ \delta{+}\epsilon
  \end{matrix} \right\}
  \binom{j}{\alpha,\epsilon,\eta,\theta}
  \binom{k}{\alpha,\delta,\lambda,\mu}
  \times
  \\
  &
  M_a^{\kappa}
  S^{j+k-l-2\alpha-\beta-2\delta-2\epsilon-2\theta-2\mu-\gamma m}
  M_b^{\eta} D_b^{k-\alpha-\lambda} D_c^{j-\alpha-\eta}
  M_c^{\lambda} M_d^{j+k+l-\alpha-\eta-\lambda} M_e^m.
  \end{align*}
\end{lemma}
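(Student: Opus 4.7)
My plan is purely substitutional: the statement of this lemma is what one gets from Lemma \ref{abcde-operator} after replacing $L_a, L_b, L_c, L_d, L_e$ and $X_i(\kappa,\delta,\epsilon)$ by their explicit forms, and then re-ordering and combining the resulting $M$, $D$, and $S$ operators. The essential observation that makes the re-ordering painless is that $S$ commutes with every $M_x$ and $D_x$ for $x \in \{b,c,d,e\}$ (it only shifts the generator $a$), and $M_a$ commutes with every operator built from $b,c,d,e$; furthermore, among the operators on $b,c,d,e$, distinct variables commute, and the only nontrivial commutators $[D_b,M_b] = I$ and $[D_c,M_c] = I$ will not enter because in the target expression the factor $M_b^\eta$ appears to the left of every $D_b$, and $D_c$ appears to the left of $M_c$ in the same order in which they arise from the substitution.

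First I would substitute $L_a^\kappa = M_a^\kappa$ from Lemma \ref{a-formulas}, and $L_d^l = (S^{-1}M_d)^l = S^{-l}M_d^l$, $L_e^m = (S^{-\gamma}M_e)^m = S^{-\gamma m}M_e^m$ from Lemmas \ref{d-formulas} and \ref{e-formulas}, using that $S$ commutes with $M_d$ and $M_e$. Next I would plug in \eqref{Lb-formula} for $L_b^{j-\alpha-\epsilon}$ and \eqref{Lc-formula} for $L_c^{k-\alpha-\delta}$, and finally the Stirling-number expansion of $X_i(\kappa,\delta,\epsilon)$ from Lemma \ref{X-function}. This yields a single large sum indexed by $\alpha,\beta,\kappa,\delta,\epsilon,\zeta,\eta,\theta,\lambda,\mu$, each summand being a scalar times a product of operators in some order.

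The second stage is collection. All the shift factors commute with each $M_x, D_x$ for $x \ne a$, so they can be pulled leftward (just past the leading $M_a^\kappa$, which I do not move) and their exponents added; this gives
\[
-\beta-\delta-\epsilon + (j-\alpha-\epsilon-2\theta) + (k-\alpha-\delta-2\mu) - l - \gamma m
= j+k-l-2\alpha-\beta-2\delta-2\epsilon-2\theta-2\mu-\gamma m,
\]
which is the exponent of $S$ in the target formula. The remaining $M_b, D_b, D_c, M_c, M_d, M_e$ factors can then be freely reorganized (distinct variables commute) into the order $M_b^\eta D_b^{\cdot} D_c^{\cdot} M_c^\lambda M_d^{\cdot} M_e^m$. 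Combining $D_b^\delta$ with $D_b^{k-\alpha-\delta-\lambda}$ gives $D_b^{k-\alpha-\lambda}$; combining $D_c^{j-\alpha-\epsilon-\eta}$ with $D_c^\epsilon$ gives $D_c^{j-\alpha-\eta}$; and adding the $M_d$ exponents $(j-\alpha-\epsilon-\eta) + (k-\alpha-\delta-\lambda) + (\alpha+\delta+\epsilon) + l$ gives $j+k+l-\alpha-\eta-\lambda$.

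Finally I would verify the scalar factor. The multinomial identities
\[
\binom{j}{\alpha,\epsilon}\binom{j-\alpha-\epsilon}{\eta,\theta} = \binom{j}{\alpha,\epsilon,\eta,\theta},
\qquad
\binom{k}{\alpha,\delta}\binom{k-\alpha-\delta}{\lambda,\mu} = \binom{k}{\alpha,\delta,\lambda,\mu},
\]
together with $\delta!\,\epsilon!\,\binom{\delta+\epsilon}{\epsilon} = (\delta+\epsilon)!$, match the claimed coefficient. The signs multiply to
\[
(-1)^{i+\alpha-\beta-\kappa-\delta}\cdot(-1)^{j-\alpha-\epsilon-\eta-\theta}\cdot(-1)^{k-\alpha-\delta-\lambda}
= (-1)^{i+j+k+\alpha-\beta-\kappa-\epsilon-\eta-\theta-\lambda},
\]
again as claimed. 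The only genuine obstacle is careful bookkeeping of these ten summation indices and of the exponents on $S$ and $M_d$; none of the steps requires new ideas beyond the commutation facts already collected.
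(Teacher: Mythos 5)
Your proposal is correct and is exactly the argument the paper intends: the lemma is presented as a direct rewriting of Lemma \ref{abcde-operator} obtained by substituting $L_a=M_a$, $L_d^l=S^{-l}M_d^l$, $L_e^m=S^{-\gamma m}M_e^m$, equations \eqref{Lb-formula} and \eqref{Lc-formula}, and the Stirling expansion of $X_i$, then collecting commuting factors. Your bookkeeping of the sign, the multinomial coefficients, the $S$- and $M_d$-exponents, and the observation that the relative order of $M_b$ versus $D_b$ and of $D_c$ versus $M_c$ is preserved (so no nontrivial commutators arise) all check out.
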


\begin{theorem} \label{structureconstants}
In $U(\mathbb{M})$ the product $( a^i b^j c^k d^l e^m) ( a^r b^n c^p d^q e^s)$
equals
  \allowdisplaybreaks
  \begin{align*}
  &
  \sum_{\alpha=0}^{\min(j,k)}
  \sum_{\beta=0}^{\alpha}
  \sum_{\kappa=0}^{i}
  \sum_{\delta=0}^{i-\kappa}
  \sum_{\epsilon=0}^{i-\kappa-\delta}
  \sum_{\zeta=0}^{i{-}\kappa{-}\delta{-}\epsilon}
  \sum_{\eta=0}^{j-\alpha-\epsilon}
  \sum_{\theta=0}^{j-\alpha-\epsilon-\eta}
  \sum_{\lambda=0}^{k-\alpha-\delta}
  \sum_{\mu=0}^{k-\alpha-\delta-\lambda}
  \sum_{\nu=0}^r
  \\
  &
  (-1)^{i+j+k+\alpha-\beta-\kappa-\epsilon-\eta-\theta-\lambda}
  (\alpha{-}\beta)^\zeta
  \alpha!
  \binom{\alpha}{\beta}
  (\delta{+}\epsilon)!
  \omega^\nu
  \binom{i}{\kappa,\zeta}
  \times
  \\
  &
  \left\{ \begin{matrix}
  i{-}\kappa{-}\zeta \\ \delta{+}\epsilon
  \end{matrix} \right\}
  \binom{j}{\alpha,\epsilon,\eta,\theta}
  \binom{k}{\alpha,\delta,\lambda,\mu}
  \binom{r}{\nu}
  \left[
  \begin{matrix}
  n \\
  k{-}\alpha{-}\lambda
  \end{matrix}
  \right]
  \left[
  \begin{matrix}
  p{+}\lambda \\
  j{-}\alpha{-}\eta
  \end{matrix}
  \right]
  \times
  \\
  &
  a^{r+\kappa-\nu}
  b^{-k+n+\alpha+\eta+\lambda}
  c^{-j+p+\alpha+\eta+\lambda}
  d^{j+k+l+q-\alpha-\eta-\lambda} e^{m+s},
  \end{align*}
where $\omega = j {+} k {-} l {-} 2\alpha {-} \beta {-} 2\delta {-} 2\epsilon
{-} 2\theta {-} 2\mu {-} m\gamma$. (For $(\alpha{-}\beta)^\zeta$ we set $0^0 =
1$.)
\end{theorem}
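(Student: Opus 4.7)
The strategy is to use the linear isomorphism $\phi\colon U(\mathbb{M}) \to P(\mathbb{M})$ together with the definition $L_x(f) = \phi(x\,\phi^{-1}(f))$: the desired product $(a^i b^j c^k d^l e^m)(a^r b^n c^p d^q e^s)$ is $\phi^{-1}$ of $L_{a^i b^j c^k d^l e^m}$ applied to the polynomial $a^r b^n c^p d^q e^s$. The preceding lemma already expresses $L_{a^i b^j c^k d^l e^m}$ as a large iterated sum of composite operators of the form
\[
M_a^\kappa\, S^\omega\, M_b^\eta\, D_b^{k-\alpha-\lambda}\, D_c^{j-\alpha-\eta}\, M_c^\lambda\, M_d^{j+k+l-\alpha-\eta-\lambda}\, M_e^m,
\]
each carrying a prefactor built from signs, factorials, multinomials, a Stirling number and the power $(\alpha-\beta)^\zeta$. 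Since $S^\omega$ affects only the variable $a$ and the remaining operators act in distinct variables, the full evaluation on the right-hand monomial can be carried out factor by factor.

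I would fix one such term and read its action right-to-left on $a^r b^n c^p d^q e^s$. The multiplications $M_e^m$, $M_d^{j+k+l-\alpha-\eta-\lambda}$, $M_c^\lambda$ raise the $e$-, $d$- and $c$-exponents by the indicated amounts; then $D_c^{j-\alpha-\eta}$ acting on $c^{p+\lambda}$ produces the falling factorial $\bigl[\begin{smallmatrix} p+\lambda \\ j-\alpha-\eta \end{smallmatrix}\bigr]$ times $c^{p+\lambda-j+\alpha+\eta}$; $D_b^{k-\alpha-\lambda}$ acting on $b^n$ produces $\bigl[\begin{smallmatrix} n \\ k-\alpha-\lambda \end{smallmatrix}\bigr]\,b^{n-k+\alpha+\lambda}$; and $M_b^\eta$ restores a factor $b^\eta$. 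At this stage the polynomial is a scalar multiple of a single monomial in which only the $a$-factor is still $a^r$.

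The new summation index $\nu$ and the factor $\omega^\nu$ then enter from the binomial theorem: $S^\omega(a^r) = (a+\omega)^r = \sum_{\nu=0}^r \binom{r}{\nu}\,\omega^\nu\, a^{r-\nu}$, with the convention $0^0 = 1$ handling the degenerate case $\omega = 0$. Applying $M_a^\kappa$ sends $a^{r-\nu}$ to $a^{r+\kappa-\nu}$, and collecting exponents gives precisely
\[
a^{r+\kappa-\nu}\,b^{-k+n+\alpha+\eta+\lambda}\,c^{-j+p+\alpha+\eta+\lambda}\,d^{j+k+l+q-\alpha-\eta-\lambda}\,e^{m+s}.
\]
Multiplying by the two falling factorials, by $\binom{r}{\nu}\omega^\nu$ and by the prefactor inherited unchanged from the previous lemma yields exactly the summand claimed in the theorem.

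No deep obstacle is present; the computation is essentially mechanical. The main point requiring care is that $S^\omega$ commutes with every $M_x$ and $D_x$ for $x \neq a$, so it really does act only on the $a^r$ factor of the right-hand monomial; beyond that, the only real task is the clean bookkeeping of the exponents of $b$, $c$, $d$, $e$ through the five multiplications and two differentiations before one invokes the binomial expansion in $a$.
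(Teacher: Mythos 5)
Your proposal is correct and is essentially the paper's own proof: the paper simply says ``Apply $L_{a^i b^j c^k d^l e^m}$ to $a^r b^n c^p d^q e^s$,'' and your right-to-left evaluation of the composite operator, including the binomial expansion of $S^{\omega}(a^r)$ that introduces $\nu$ and $\omega^\nu$, is exactly the intended (and correct) bookkeeping.
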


\begin{proof}
Apply $L_{a^i b^j c^k d^l e^m}$ to $a^r b^n c^p d^q e^s$.
\end{proof}


\section{ The universal alternative enveloping algebra}

By Lemma 6.3 of \cite{BHPU} we have
  \[
  (c, ab, ab)= -bd,
  \quad
  (b, ac, ac) = cd,
  \quad
  (a, bc, bc) = 2d^2.
  \]
Using Theorem \ref{structureconstants} (of the present paper) we calculate
  \[
  ( ac+be, ac+be, a ) = ( ac, be, a ) + ( be, ac, a ) = de.
  \]
Let $J$ be the ideal of $U(\mathbb{M})$ generated by $\{ bd, cd, d^2, de \}$.
We consider $U(\mathbb{M})/J$, and our goal is to prove that this is the
universal alternative enveloping algebra of $\mathbb{M}$. Since (the cosets of
) the elements $d^2$, $cd$, $bd$, $de$ are zero in $U(\mathbb{M})/J$ we can
reduce each basis monomial of $U(\mathbb{M})$ modulo $J$ to either $a^i d$
(type 1) or $a^r b^n c^p e^s$ (type 2).

\begin{lemma} \label{multiplicationtable}
In $U(\mathbb{M})/J$ we have
  \allowdisplaybreaks
  \begin{align*}
  a^i d \cdot a^r d
  &=
  0,
  \\
  a^i d \cdot a^r b^n c^p e^s
  &=
  \delta_{0n} \delta_{0p} \delta_{0s} a^i (a{-}1)^r d,
  \\
  a^i b^j c^k e^m \cdot a^r d
  &=
  \delta_{j0} \delta_{k0} \delta_{m0} a^{i+r} d,
  \\
  a^i b^j c^k e^m \cdot a^r b^n c^p e^s
  &=
  a^i (a{+}j{+}k{-}\gamma m)^r b^{j+n} c^{k+p} e^{m+s}
  +
  \delta_{m,0} \delta_{s,0} \delta_{j+n, 1}\delta_{k+p,1} T^{i r}_{jk},
  \end{align*}
where
  \[
  T^{ir}_{jk} =
  \begin{cases}
  \;
  0
  &\text{if $(j,k)=(0,0)$},
  \\
  \;
  (a-1)^{i+r} d - a^i (a+1)^r d
  &\text{if $(j,k)=(1,0)$},
  \\
  \;
  - (a-1)^{i+r} d - a^i (a+1)^r d
  &\text{if $(j,k)=(0,1)$},
  \\
  \;
  a^i (a-1)^r d - a^i (a+2)^r d
  &\text{if $(j,k)= (1,1)$}.
  \end{cases}
  \]
\end{lemma}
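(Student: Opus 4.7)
The plan is to apply the explicit structure-constant formula of Theorem~\ref{structureconstants} to each of the four products and then reduce the output modulo $J$. The central bookkeeping fact is that a PBW basis monomial $a^ib^jc^kd^le^m$ is nonzero in $U(\mathbb{M})/J$ if and only if either $l=0$ (a type~2 monomial) or $l=1$ with $j=k=m=0$ (a type~1 monomial); every other basis monomial contains one of $d^2$, $bd$, $cd$, $de$ as a factor and is killed. Since each summand produced by Theorem~\ref{structureconstants} carries a $d$-exponent of $j+k+l+q-\alpha-\eta-\lambda$, and the sum limits $\alpha\le\min(j,k)$, $\eta\le j-\alpha-\epsilon$, $\lambda\le k-\alpha-\delta$ force $\alpha+\eta+\lambda\le j+k-\alpha\le j+k$, every term has $d$-exponent at least $l+q$. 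This simple bound drives the case analysis.

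The first three products fall out easily. For $a^id\cdot a^rd$ we have $l=q=1$, so the $d$-exponent is always $\ge 2$ and the product vanishes. For $a^id\cdot a^rb^nc^pe^s$, with $l=1$ and $j=k=m=0$, the indices $\alpha,\beta,\delta,\epsilon,\eta,\theta,\lambda,\mu$ are all forced to zero by the sum limits; the Stirling factor $\left\{\begin{matrix}i-\kappa-\zeta\\0\end{matrix}\right\}$ together with the convention $0^0=1$ collapses $(\kappa,\zeta)$ to $(i,0)$; the surviving $d$-monomial requires $n=p=s=0$ (else $bd$, $cd$, or $de$ appears); and the $\nu$-sum $\sum_\nu\binom{r}{\nu}(-1)^\nu a^{r-\nu}=(a-1)^r$ gives $a^i(a-1)^rd$. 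The product $a^ib^jc^ke^m\cdot a^rd$ is analogous: the $d$-exponent equals~$1$ only when $\alpha=0$, $\eta=j$, $\lambda=k$, after which the vanishing of the $b,c,e$ exponents forces $j=k=m=0$ and the sum collapses to $a^{i+r}d$.

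The interesting case is $a^ib^jc^ke^m\cdot a^rb^nc^pe^s$, whose contributions split by $d$-exponent. The $d$-exponent~$0$ part requires $\alpha=0$, $\eta=j$, $\lambda=k$, which force $\beta=\delta=\epsilon=\theta=\mu=0$; the $(\kappa,\zeta)$-sums collapse to $\kappa=i$ as before, and the $\nu$-sum evaluates $\sum_\nu\binom{r}{\nu}(j+k-m\gamma)^\nu a^{r-\nu}=(a+j+k-\gamma m)^r$, producing the main term $a^i(a+j+k-\gamma m)^rb^{j+n}c^{k+p}e^{m+s}$. The $d$-exponent~$1$ part survives in the quotient only if $m=s=0$ and the $b,c$ exponents vanish, which forces $j+n=1$ and $k+p=1$, exactly the Kronecker deltas in the statement; this leaves four subcases $(j,k)\in\{(0,0),(0,1),(1,0),(1,1)\}$, of which $(0,0)$ is vacuous because $\alpha+\eta+\lambda=-1$ has no solution, so $T^{ir}_{00}=0$.

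The main obstacle will be assembling $T^{ir}_{jk}$ in the three non-vacuous subcases, because within each subcase several tuples of secondary indices $(\epsilon,\theta)$ and $(\delta,\mu)$ contribute simultaneously and combine nontrivially. For example, in subcase $(1,0)$ one finds three contributing tuples: $(\epsilon,\theta)=(0,0)$ gives $-a^i(a+1)^rd$ via $\omega=1$; $(\epsilon,\theta)=(0,1)$ gives $+a^i(a-1)^rd$ via $\omega=-1$; and $(\epsilon,\theta)=(1,0)$ activates the Stirling number $\left\{\begin{matrix}i-\kappa-\zeta\\1\end{matrix}\right\}$, which is nonzero for $\kappa<i$, and together with $X_i(\kappa,0,1)=\binom{i}{\kappa}$ yields $\sum_\kappa(-1)^{i-\kappa}\binom{i}{\kappa}a^\kappa\cdot(a-1)^r=\bigl((a-1)^i-a^i\bigr)(a-1)^r$, canceling the $+a^i(a-1)^r$ piece of the second tuple and leaving $(a-1)^{i+r}d-a^i(a+1)^rd$. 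The subcase $(0,1)$ is symmetric, and $(1,1)$ has three basic choices of $(\alpha,\eta,\lambda)$ with the additional complication that $\alpha-\beta$ can be nonzero, so $(\alpha-\beta)^\zeta$ no longer collapses the $\zeta$-sum to a single term; nevertheless every sub-sub-sum reduces to the same binomial/Stirling identities already used above. The work required is thus careful arithmetic bookkeeping rather than any new ingredient.
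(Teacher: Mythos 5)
Your route is viable and reaches the stated formulas, but it is genuinely different from the paper's. The paper does not derive the first identity or the $m=s=0$ case of the fourth (which is where all of the $T^{ir}_{jk}$ live) from Theorem \ref{structureconstants}: it imports them from Lemma 6.5 of \cite{BHPU}, since in the absence of $e$ everything takes place in the enveloping algebra of the 4-dimensional subalgebra spanned by $a,b,c,d$. The second and third identities are obtained in the paper by collapsing the operators $L_{a^id}$ and $L_{a^ib^jc^ke^m}$ via Lemma \ref{abcde-operator} (to $L_a^iL_d$ and $L_a^iL_b^jL_c^k$ acting on the right factor), and only the fourth identity with $m+s\ne 0$ is handled by the $d$-exponent argument applied to Theorem \ref{structureconstants}. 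Your uniform treatment re-derives the $T^{ir}_{jk}$ from scratch; this is more work but makes the lemma self-contained and independently confirms the 4-dimensional computation. Your $d$-exponent bound, the collapse to the main term, and your explicit evaluation of the $(j,k)=(1,0)$ subcase are all correct.

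One concrete caution about the $(j,k)=(1,1)$ subcase, which you leave as ``bookkeeping'': your count of ``three basic choices of $(\alpha,\eta,\lambda)$'' ignores the factors $\left[\begin{matrix}n\\k-\alpha-\lambda\end{matrix}\right]$ and $\left[\begin{matrix}p+\lambda\\j-\alpha-\eta\end{matrix}\right]$ appearing in Theorem \ref{structureconstants}, which your sketch never mentions. In the $(1,0)$ and $(0,1)$ subcases these factors are identically $1$, so your computation there is unaffected; but for $(j,k)=(1,1)$ (hence $n=p=0$) the choice $(\alpha,\eta,\lambda)=(0,1,0)$ carries the factor $\left[\begin{matrix}0\\1\end{matrix}\right]=0$ and contributes nothing. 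If one sums over all three choices without this factor, the result is $a^i(a-1)^r d - a^r(a-1)^i d - 2a^i(a+2)^r d$, which disagrees with $T^{ir}_{11}$ already for $i=0$, $r=1$: there $(bc)\cdot a = abc+2bc-3d$ (directly from $\rho_a$ in Lemma \ref{a-formulas}), forcing $T^{01}_{11}=-3d$, whereas the uncorrected sum gives $-2a-5$ times $d$. With the factor included, only $(1,0,0)$ and $(0,0,1)$ survive and the sum does collapse to $a^i(a-1)^rd - a^i(a+2)^rd$ as stated. So the plan closes, but only if those factors are carried along.
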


\begin{proof}
The proof of the first equation, and of the last in the case $s = m = 0$, is
given in Lemma 6.5 of \cite{BHPU}. For the second equation, we compute $L_{a^i
d}$ using Lemma \ref{abcde-operator}. Since $j=k=0$ and $m=0$ we have that
$\alpha=\beta=0$. Therefore $L_{a^i d}$ equals
  \[
  \sum_{\kappa=0}^i
  \sum_{\delta=0}^{i-\kappa}
  \sum_{\epsilon=0}^{i-\kappa-\delta}
  (-1)^{i-\kappa-\delta}
  \delta!
  \epsilon!
  \binom{0}{0,\varepsilon}
  \binom{0}{0,\delta}
  X_i(\kappa,\delta,\epsilon)
  L_a^{\kappa}
  S^{-\delta-\epsilon}
  L_b^{-\epsilon}
  D_b^{\delta}
  D_c^{\epsilon}
  L_c^{-\delta}
  M_d^{\delta+\epsilon}
  L_d.
  \]
Clearly $\epsilon = \delta = 0$, and since $X_i(\kappa, 0, 0) = 0$ unless
$\kappa = i$ by Lemma \ref{X-function}, we get $L_{a^i d} = L^i_a L_d$.
Therefore in $U(\mathbb{M})/J$ we have
  \[
  L_{a^i d} ( a^r b^n c^p e^s )
  =
  M_a^i S^{-1} M_d ( a^r b^n c^p e^s )
  =
  \delta_{s0} \delta_{n0} \delta_{p0} a^i (a{-}1)^r d.
  \]
For the third equation, we consider $L_{a^i b^j c^k e^m}$. Since the exponent
of $d$ is $\le 1$ in monomials of both types 1 and 2, we have $\alpha + \delta
+ \epsilon = 0$ in Lemma \ref{abcde-operator}. Hence $\alpha = \delta =
\epsilon = 0$ and so $\beta = 0$. Therefore,
  \allowdisplaybreaks
  \begin{align*}
  L_{a^ib^jc^ke^m}
  =
  \sum_{\kappa=0}^i
  (-1)^{i-\kappa}
  X_i(\kappa, 0, 0)
  L_a^{\kappa} S^0 L_b^j L_c^k= L_a^i L_b^j L_c^k.
  \end{align*}
Using equations \eqref{Lb-formula} and \eqref{Lc-formula} we obtain the stated
result.  It remains only to consider the fourth equation in the case $s + m \ne
0$. Since the exponent of $e$ in Theorem \ref{structureconstants} is non-zero,
the exponent of $d$ must be zero. Hence $j + k - \alpha - \eta - \lambda = 0$
and so $\alpha + \eta + \lambda = j + k$. But then, since $\alpha + \eta \le
j$, $\lambda \le k$, $\eta \le j$, $\alpha + \lambda \le k$, we see that
$\lambda = k$, $\eta = j$, $\alpha = \beta = 0$, $\delta = \epsilon = \theta =
\mu = \zeta = 0$. The sum collapses to
  \[
  \sum_{\nu=0}^r
  (j{+}k{-}\gamma m)^{\nu}
  \binom{r}{\nu}
  a^{i+r-\nu}
  b^{j+n}
  c^{k+p}
  e^{m+s}
  =
  a^i (a{+}j{+}k{-}\gamma m)^r b^{j+n} c^{k+p} e^{m+s}.
  \]
The proof is complete.
\end{proof}

\begin{theorem}
The universal alternative enveloping algebra $A(\mathbb{M})$ has the basis $\{
\, a^i d, \, a^i b^j c^k e^m \mid i,j,k \ge 0 \, \}$ and the structure
constants of Lemma \ref{multiplicationtable}.
\end{theorem}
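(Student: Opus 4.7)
The plan is to identify $A(\mathbb{M}) = U(\mathbb{M})/I(\mathbb{M})$ with the quotient $U(\mathbb{M})/J$, since then the basis and structure constants follow directly from Lemma \ref{multiplicationtable}. The inclusion $J \subseteq I(\mathbb{M})$ is immediate from the associator computations collected at the start of this section: each of the four generators $bd$, $cd$, $d^2$, $de$ is exhibited (up to sign and scalar) as an associator, and since $I(\mathbb{M})$ is an ideal it absorbs $J$. This gives a canonical surjection $\pi\colon U(\mathbb{M})/J \twoheadrightarrow A(\mathbb{M})$.

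The reverse inclusion $I(\mathbb{M}) \subseteq J$ is equivalent to the assertion that $U(\mathbb{M})/J$ is itself alternative, and this is the heart of the argument. After linearization it reduces to checking the identities $(x,x,y) = 0$ and $(y,x,x) = 0$ on basis monomials, which by Lemma \ref{multiplicationtable} becomes a finite case analysis over $F[a]$. Associators involving a type 1 argument $a^i d$ collapse almost immediately: the Kronecker delta factors in the products $(a^i d)(a^r b^n c^p e^s)$ and $(a^i b^j c^k e^m)(a^r d)$ force most terms to vanish outright, leaving only easy polynomial identities in $F[a]$ to verify.

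The main obstacle is the case of three type 2 arguments, where each binary product $(a^i b^j c^k e^m)(a^r b^n c^p e^s)$ is a principal type 2 term with a built-in shift $a \mapsto a + j + k - \gamma m$ plus a type 1 correction $T^{ir}_{jk}$, active only in the handful of $(j,k,n,p)$-configurations picked out by $\delta_{j+n,1}\delta_{k+p,1}$. In an associator $((xy)z) - (x(yz))$ both bracketings may contribute $T$-terms, and these must combine with the shifts to produce an antisymmetric expression. This is a finite but fiddly verification driven by the four explicit formulas for $T^{ir}_{jk}$; once carried out, $U(\mathbb{M})/J$ is alternative and $\pi$ is an isomorphism.

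Finally, the set $\{a^i d,\, a^i b^j c^k e^m\}$ spans $U(\mathbb{M})/J$ via the PBW basis of $U(\mathbb{M})$ together with the reductions $\bar d^2 = \bar b \bar d = \bar c \bar d = \bar d \bar e = 0$, which successively eliminate every PBW monomial outside the two claimed types. For linear independence I would construct an abstract algebra $A'$ on the free vector space with this basis and multiplication dictated by Lemma \ref{multiplicationtable}, verify directly that $a,b,c,d,e$ satisfy the Malcev relations \eqref{oneparameterfamily} inside $A'^-$, and invoke the universal property of $U(\mathbb{M})$ to obtain a morphism $U(\mathbb{M}) \to A'$ which kills $J$ and hence descends to $U(\mathbb{M})/J \to A'$. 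This descended map sends the claimed spanning set bijectively to a basis of $A'$, forcing linear independence and completing the proof.
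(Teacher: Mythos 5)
Your proposal follows essentially the same route as the paper: establish $J \subseteq I(\mathbb{M})$ from the displayed associators, prove that $U(\mathbb{M})/J$ is alternative by the type-1/type-2 case analysis driven by Lemma \ref{multiplicationtable} (the paper outsources the $e$-free cases to Theorem 6.6 of \cite{BHPU} rather than redoing the $T$-term bookkeeping you describe), and conclude $A(\mathbb{M}) \cong U(\mathbb{M})/J$. Your closing linear-independence argument, via an abstract model $A'$ and the universal property of $U(\mathbb{M})$, is a correct addition that the paper leaves implicit.
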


\begin{proof}
We compute all possible associators of basis monomials of $U(\mathbb{M})/J$ of
types 1 and 2. Since the product of a monomial of type 1 with any monomial is a
linear combination of monomials of type 1, and the product of two monomials of
type 1 is zero, it is clear that every associator with two monomials of type 1
vanishes. Next consider an associator with one monomial of type 1 and two of
type 2, for example
  \[
  ( a^i d, a^r b^n c^p e^s, a^l b^k c^t e^q )
  =
  ( a^i d \cdot a^r b^n c^p e^s ) a^l b^k c^t e^q
  -
  a^i d ( a^r b^n c^p e^s \cdot a^l b^k c^t e^q ).
  \]
If $s + q \ne 0$, then a straightforward application of Lemma
\ref{multiplicationtable} shows that the result is zero.  If $s + q = 0$, then
Theorem 6.6 of \cite{BHPU} shows that $(a^i d, a^r b^n c^p, a^l b^k c^t) = 0$.
The other two cases are similar. We finally consider three monomials of type 2:
  \[
  ( a^i b^j c^k e^m, a^r b^n c^p e^s, a^l b^q c^t e^y )
  =
  A - B + C - D,
  \]
where
  \allowdisplaybreaks
  \begin{align*}
  A - B
  &=
  a^i (a{+}j{+}k{-}\gamma m)^r b^{j+n} c^{k+p} e^{m+s} \cdot a^l b^q c^t e^y
  \\
  &\quad
  -
  a^i b^j c^k e^m \cdot a^r (a{+}n{+}p{-}\gamma s)^l b^{n+q} c^{p+t} e^{s+y}
  \\
  &=
  \delta_{j+n+q, 1} \delta_{k+p+t, 1}
  \left(
  \begin{array}{l}
  \sum_{\nu=0}^r
  \binom{r}{\nu}
  (j{+}k{-}\gamma m)^{\nu}
  \delta_{m+s,0}
  \delta_{y,0}
  T_{j+n,p+k}^{i+r-\nu,l}
  \\[6pt]
  \quad
  -
  \sum_{\mu=0}^l
  \binom{l}{\mu}
  (n{+}p{-}\gamma s)^{\mu}
  \delta_{m,0}
  \delta_{s+y,0}
  T_{jk}^{i, r+l-\mu})
  \end{array}
  \right),
  \\[6pt]
  C - D
  &=
  \delta_{m,0}
  \delta_{s,0}
  \delta_{j+n,1}
  \delta_{k+p,1}
  T_{jk}^{ir}
  a^l b^q c^t e^y
  -
  \delta_{s0}
  \delta_{y0}
  \delta_{n+q,1}
  \delta_{p+t,1}
  a^i b^j c^k e^m
  T_{np}^{rl}.
\end{align*}
If $y \ne 0$ then $s + y \ne 0$ and so $A - B = 0$; moreover, the second term
in $C - D$ vanishes, and the second equation of Lemma \ref{multiplicationtable}
shows that the first term in $C - D$ also vanishes, so $C - D = 0$. Similar
arguments apply if $m \ne 0$ or $s \ne 0$. Finally, if $s + m + y = 0$, then we
know the value of the corresponding associator from Theorem 6.6 of \cite{BHPU},
and hence the alternativity property is clear.
\end{proof}

\begin{corollary}
Every Malcev algebra $\mathbb{M}_\gamma$ in the one-parameter family of
solvable 5-dimensional Malcev algebras is special; that is, $\mathbb{M}_\gamma$
is isomorphic to a subalgebra of the commutator algebra of an alternative
algebra.
\end{corollary}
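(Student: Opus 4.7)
The plan is to deduce this essentially for free from the existence of $A(\mathbb{M})$ together with the explicit basis exhibited in the preceding theorem. Recall that $A(\mathbb{M}) = U(\mathbb{M})/I(\mathbb{M})$ is alternative by construction, and the composition
\[
\mathbb{M} \xrightarrow{\iota} U(\mathbb{M})^- \xrightarrow{\pi} A(\mathbb{M})^-
\]
is a morphism of Malcev algebras, since $\iota$ is a Malcev morphism by the Pérez-Izquierdo–Shestakov theorem, and $\pi$ is an algebra morphism that automatically respects commutators. The commutator algebra $A(\mathbb{M})^-$ of an alternative algebra is a Malcev algebra. So the only thing to verify is that $\pi \circ \iota$ is injective on $\mathbb{M}$.

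The key step is to read off injectivity from the basis $\{\, a^i d, \, a^i b^j c^k e^m \mid i,j,k,m \ge 0\,\}$ of $A(\mathbb{M})$ given by the theorem. Under $\pi \circ \iota$ the five generators of $\mathbb{M}$ map to
\[
a \mapsto a^1 b^0 c^0 e^0, \quad
b \mapsto a^0 b^1 c^0 e^0, \quad
c \mapsto a^0 b^0 c^1 e^0, \quad
e \mapsto a^0 b^0 c^0 e^1, \quad
d \mapsto a^0 d,
\]
which are five distinct elements of the stated basis. Hence any nontrivial linear combination of $a,b,c,d,e$ maps to a nonzero element of $A(\mathbb{M})$, so $\pi \circ \iota$ restricted to $\mathbb{M}$ is injective.

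I do not anticipate a real obstacle here; the Corollary is a direct harvest of the explicit construction in the previous theorem. The only mild subtlety is to make sure one invokes the correct fact that every alternative algebra is Malcev-admissible (so that $A(\mathbb{M})^-$ really is a Malcev algebra into which we can embed $\mathbb{M}_\gamma$), and to note that specialness is exactly the assertion of such an embedding. Thus $\mathbb{M}_\gamma$ is isomorphic to its image in $A(\mathbb{M})^-$, which is a subalgebra of the commutator algebra of the alternative algebra $A(\mathbb{M})$, proving that $\mathbb{M}_\gamma$ is special.
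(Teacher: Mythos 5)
Your argument is correct and is exactly the (implicit) reasoning behind the paper's corollary: the preceding theorem exhibits a basis of $A(\mathbb{M})$ containing the images of $a,b,c,d,e$ as five distinct basis monomials, so the canonical Malcev morphism $\mathbb{M}\to A(\mathbb{M})^-$ is injective and $\mathbb{M}_\gamma$ is special. No gap.
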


We can construct much smaller (but still infinite-dimensional) alternative
enveloping algebras for the Malcev algebras $\mathbb{M}_\gamma$ as follows. We
consider the algebra $\mathbb{A}_\gamma$ with basis $\{ \, a^r, b, c, d, e \mid
r \in \mathbb{Z}, \, r \ge 1 \, \} $; the nonzero products of basis elements
are $a^r \cdot a^s = a^{r+s}$ together with
  \begin{equation} \label{newalgebra}
  \;
  a^r \cdot d = d,
  \quad
  b \cdot a^r = b,
  \quad
  b \cdot c = d,
  \quad
  c \cdot a^r = c,
  \quad
  c \cdot b = -d,
  \quad
  e \cdot a^r = (-\gamma)^r e.
  \end{equation}
It is easy to check that $\mathbb{M}_\gamma$ is isomorphic to the subalgebra of
$\mathbb{A}_\gamma^-$ with basis $\{ a, b, c, d, e \}$. It remains to verify
that $\mathbb{A}_\gamma$ is alternative; equivalently, that the associator
$(x,y,z)$ is a skew-symmetric function of $x, y, z \in \mathbb{A}_\gamma$.
Since the associator is trilinear it suffices to check basis elements. We write
$\mathbb{A}_\gamma' = \mathrm{span}\{ a^r, b, c, d \}$ and $\mathbb{I}_\gamma =
\mathrm{span}\{ a^t - a^s \}$ for all $s,t>0$. It follows from
\eqref{newalgebra} that $\mathbb{A}_\gamma'$ is a subalgebra of
$\mathbb{A}_\gamma$, that $\mathbb{I}_\gamma$ is an ideal in
$\mathbb{A}_\gamma'$, and that $\mathbb{A}_\gamma'/\mathbb{I}_\gamma$ is
isomorphic to the 4-dimensional alternative algebra of \cite[Table 3]{BHPU}.
Consider arbitrary elements $x + \mathbb{I}_\gamma$, $y + \mathbb{I}_\gamma$,
$z + \mathbb{I}_\gamma$ in $\mathbb{A}_\gamma'/\mathbb{I}_\gamma$. Since the
quotient algebra is alternative, we have
  \[
  (x,y,z) + \mathbb{I}_\gamma
  =
  (x + \mathbb{I}_\gamma, y + \mathbb{I}_\gamma, z + \mathbb{I}_\gamma)
  =
  (y + \mathbb{I}_\gamma, x + \mathbb{I}_\gamma, z + \mathbb{I}_\gamma)
  =
  (y,x,z) + \mathbb{I}_\gamma.
  \]
Hence $(x,y,z) - (y,x,z) \in \mathbb{I}_\gamma$. But \eqref{newalgebra} implies
that associators take values in $\mathbb{L} = \mathrm{span}\{b,c,d,e\}$.  Since
$\mathbb{I}_\gamma \cap \mathbb{L} = \{0\}$, we obtain $(x,y,z) = (y,x,z)$. A
similar argument shows that $(x,y,z) = (x,z,y)$, and so $\mathbb{A}_\gamma'$ is
alternative.

It remains to consider the cases in which at least one of $x, y, z$ is $e$.
First, we show that $(e,x,y) = (x,e,y) = (x,y,e) = 0$ for any $x, y \in
\mathbb{L}$: for this, it is easy to see that $\mathbb{L}^2 =
\mathrm{span}\{d\}$ and $\mathbb{L}^3 = \{0\}$. Second, suppose that $x = a^s$
and $y \in \mathrm{span}\{b,c,d\}$, then $(e, a^s, y)= (e \cdot a^s) y - e (a^s
\cdot y) = 0$ by \eqref{newalgebra}; the other five permutations $( e, a^s, y
)$ are similar. Finally, for the remaining cases direct calculation easily
shows that
  \[
 (e, a^r, e) = (a^r, e, e) = (e, e, a^r) =
 (e, a^r, a^s) = (a^r, e, a^s) = (a^r, a^s, e) = 0.
  \]
Thus $\mathbb{A}_\gamma$ is alternative.


\section{Conclusion}

It follows from results of Elduque \cite{Elduque} that, unlike the
4-dimensional solvable Malcev algebra considered in \cite{BHPU}, the Malcev
algebras $\mathbb{M} = \mathbb{M}_\gamma$ in the one-parameter family of
5-dimensional solvable algebras are not isomorphic to subalgebras of the
7-dimensional simple Malcev algebra. This raises the question of the existence
of a finite-dimensional alternative algebra $\mathbb{A}_\gamma$ for which the
commutator algebra $\mathbb{A}_\gamma^-$ contains a subalgebra isomorphic to
$\mathbb{M}_\gamma$.  If such an algebra $\mathbb{A}_\gamma$ does not exist for
some $\gamma$, then we have an example of a finite-dimensional special Malcev
algebra which does not have a finite-dimensional enveloping alternative
algebra.  A related result is the generalization of the Ado theorem in
\cite{PerezIzquierdoShestakov}; this guarantees the existence of a
finite-dimensional nonassociative enveloping algebra for any finite-dimensional
Malcev algebra, but this enveloping algebra is not necessarily alternative.
This leads to an important open problem: If $\mathbb{M}$ is a special
finite-dimensional Malcev algebra, then does $\mathbb{M}$ necessarily have a
finite-dimensional alternative envelope?


\end{document}